\documentclass[11pt,reqno]{amsart}
\setlength{\hoffset}{-.5in}
\setlength{\voffset}{-.25in}
\usepackage{amssymb,latexsym}
\usepackage[T2A]{fontenc}
 \usepackage[cp1251]{inputenc}
\usepackage[english,ukrainian]{babel}
\textwidth=6.175in
\textheight=8.5in

\theoremstyle{plain}
\numberwithin{equation}{section}
\newtheorem{thm}{Theorem}
\newtheorem{lemma}{Lemma}[section]
\newtheorem{theorem}{Theorem}[section]
\newtheorem{definition}[thm]{Definition}

\begin{document}

\setcounter{page}{1}

\title[Derivations and identities ]{Derivations and identities for Fibonacci and Lucas polynomials}
\author{Leonid Bedratyuk}
\address{Department of Mathematics and Statistics\\
                Khmelnytskyi National  University\\
                Khmelnytskyi, Instytutska ,11\\
                29016, Ukraine}
\email{leonid.uk@gmail.com}

\begin{abstract}
We introduce the notion of Fibonacci and Lucas derivations of the polynomial algebras and prove that any element of kernel of the derivations defines a polynomial identity  for the Fibonacci and Lucas polynomials. Also, we  prove  that any polynomial identity  for Appel polynomial yields a polynomial  identity for the Fibonacci and Lucas polynomials та описуємо   arised interwining maps.
\end{abstract}

\maketitle

\section{Introduction}

The Fibonacci     $F_n(x)$  and Lucas   $L_n(x)$  polynomials are defined by the following ordinary generating functions
\begin{gather*}
\mathcal{G}(F_n(x),t)=\frac{t }{1-xt-t^2}=\sum_{n=0}^{\infty} F_n(x)  t^n,\\
 \mathcal{G}(L_n(x),t)=\frac{1+t^2  }{1-xt-t^2}=\sum_{n=0}^{\infty}L_n(x)  t^n. 
\end{gather*}

 The derivatives of the polynomials have the form , see  \cite{F_P},\cite{Filip}: 
 \begin{align}
&\frac{d}{dx} F_n(x)=\sum_{k=0}^{\left[\frac{n-1}{2} \right]} (-1)^k ({n{-}1{-}2k})F_{n{-}1{-}2k}(x),\label{der1}\\
&\frac{d}{dx} L_n(x)=n \sum_{k=0}^{\left[\frac{n-1}{2} \right]} (-1)^k L_{n{-}1{-}2k}(x),\label{der2}
 \end{align}
 
We are interested in finding  polynomial identities for the  polynomials, i.e.  identities  of the form
$$
P(F_0(x),F_1(x),\ldots, F_n(x))={\rm const} \,\,\text{  and }  P(L_0(x),L_1(x),\ldots, L_n(x))={\rm const},
$$
where  $P(x_0,x_1,\ldots,x_n)$ is a polynomial of  $n+1$ variables.
We offer a method for finding such identities which  is based on the followimng simple observation:
if 
$$
\frac{d}{dx} P(F_0(x),F_1(x),\ldots, F_n(x))=0,
$$
тоді 
$
P(F_0(x),F_1(x),\ldots, F_n(x))={\rm const}
$
i.e., it is an indentity.

On the other hand, rewrite  this derivative in the form 
\begin{gather*}
\frac{d}{dx} P(F_0(x),F_1(x),\ldots, F_n(x))=\\=\frac{\partial }{\partial x_0}P(x_0,x_1,\ldots,x_n)\Big |_{\{x_i=F_i(x)\}} \frac{d}{dx} F_0(x)+\cdots +\frac{\partial }{\partial x_n}P(x_0,x_1,\ldots,x_n)\Big |_{\{x_i=F_i(x)\}} \frac{d}{dx} F_n(x).
\end{gather*}
Therefore  
 
 \begin{gather*}
\frac{d}{dx} P(F_0(x),F_1(x),\ldots, F_n(x))=\\=\left( \frac{\partial }{\partial x_0}P(x_0,x_1,\ldots,x_n) \mathcal{D}_{\mathcal{F}}(x_0) +\cdots +\frac{\partial }{\partial x_n}P(x_0,x_1,\ldots,x_n)\mathcal{D}_{\mathcal{F}}(x_n)\right) \Big |_{\{x_i=F_i(x)\}} =\\=
\mathcal{D}_{\mathcal{F}}(P(x_0,x_1,\ldots,x_n))\Big |_{\{x_i=F_i(x)\}},
\end{gather*}
 where the differential operator  $\mathcal{D}_{\mathcal{F}}$ defined by 
 $$
 \mathcal{D}_{\mathcal{F}}:=x_1 \frac{\partial}{\partial x_2}{+}2 x_2 \frac{\partial}{\partial x_3}+(3\,x_{{3}}{-}x_{{1}})\frac{\partial}{\partial x_4}{+} \cdots+\bigl(\sum_{k=0}^{\left[\frac{n-1}{2} \right]} (-1)^k ({n{-}1{-}2k})x_{n{-}1{-}2k})\bigr) \frac{\partial}{\partial x_n}.
 $$
 It is clear that if $\mathcal{D}_{\mathcal{F}}(P(x_0,x_1,\ldots,x_n))$ then $\dfrac{d}{dx} P(F_0(x),F_1(x),\ldots, F_n(x))=0.$
 Thus, any non-trivial polynomial  $P(x_0,x_1,\ldots,x_n)$, which belongs to the kernel of   $\mathcal{D}_{\mathcal{F}}$, i.e., the following holds   ${ \mathcal{D}_{\mathcal{F}}(P(x_0,x_1,\ldots,x_n))=0}$,   defines a polynomial identity of the form  $P(F_0(x),F_1(x),\ldots, F_n(x))={\rm const }.$ Analogously, for the Lucas folynomials, we introduce the differential operator 
 $$
  \mathcal{D}_{\mathcal{L}}(x_n)=n \sum_{k=0}^{\left[\frac{n-1}{2} \right]} (-1)^k x_{n{-}1{-}2k}
 $$
and show  that    the condition $$ \mathcal{D}_{\mathcal{L}}(P(x_0,x_1,\ldots,x_n))=0$$  implies the identity  
$P(L_0(x),L_1(x),\ldots, L_n(x))={\rm const }.$
For instance, it is easily verified that for the polynomial  $x_1 x_3-x_2^2$ we have $\mathcal{D}_{\mathcal{F}}(x_1 x_3-x_2^2)=0$, thus  $F_{{1}}(x)F_{{3}}(x)-{F_{{2}}}(x)^{2}$ is a constant. The substitution $F_n(x)$  of $x_n$  gives
 \begin{gather*}
F_{{1}}(x)F_{{3}}(x)-{F_{{2}}}(x)^{2}=1.
\end{gather*}

A similar problem  was solved  in \cite{B_App}  for  the Appel polynomials. 
 In this paper was proved that any non-trivial element of kernel of the differential operator 
 $$
\mathcal{D}_{\mathcal{A}}=x_0 \frac{\partial}{\partial x_1}+2 x_1 \frac{\partial }{\partial x_2}+\cdots+ n x_{n-1} \frac{\partial }{\partial x_n}.
$$
 defines some  polynomial identity for the Appel polynomial.
 Recall that    polynomials   $\{A_n(x) \},$  $\deg(A_n(x))=n$ is called  the Appel polynomials if 
\begin{gather}
A'_n(x)=n A_{n-1}(x), n=0,1,2,\ldots .
\end{gather}

 In the present paper we show  how the   known polylomial identities for Appel can be used to find a polynomials identities for the Fibonacci and Lucas polynomials.  
   A linear map  $\psi_{AF}$   is called  $(\mathcal{D}_{\mathcal{A}},\mathcal{D}_{\mathcal{F}})$-interwin\-ing map if the following condition holds: $\psi_{AF} \mathcal{D}_{\mathcal{A}}=\mathcal{D}_{\mathcal{F}} \psi_{AF} $. Any such map induces  an ispmorphism from   $\ker \mathcal{D}_{\mathcal{A}}$  to  $\ker \mathcal{D}_{\mathcal{F}}.$ 
 For instance,  the discriminant of the polynomial (in the variables $X,Y$)
 
$$
x_{{0}}{X}^{3}+3\,x_{{1}}{X}^{2}Y+3\,x_{{2}}X{Y}^{2}+x_{{3}}{Y}^{3},
$$
equals
$$
\begin{vmatrix}
x_0 & 3 x_1 &3 x_2 & x_3 &0 \\
0 & x_0 & 3 x_1 &3 x_2 & x_3 \\
3x_0 & 6 x_1 &3 x_2 & 0 &0 \\
0&3x_0 & 6 x_1 &3 x_2 & 0  \\
0&0&3x_0 & 6 x_1 &3 x_2  \\
\end{vmatrix}=27 (6\,x_{{0}}x_{{3}}x_{{2}}x_{{1}}+3\,{x_{{1}
}}^{2}{x_{{2}}}^{2}-4\,{x_{{1}}}^{3}x_{{3}}-4\,{x_{{2}}}^{3}x_{{0}}-{x_{{0}}}^{2}{x_{{3}}}^{2}
),
$$
 and lies in the kernel of the operator $\mathcal{D}_{\mathcal{A}}$. It is  well known result of the classical invariant theory. 
It is easily checked that the linear map  defined by 
\begin{align*}
&\psi_{AL}(x_0)=x_0,
\psi_{AL}(x_1)=x_1,\\
&\psi_{AL}(x_2)=x_2,
\psi_{AL}(x_3)=x_3+3 x_1,
\end{align*}
commutes with the operators  $\mathcal{D}_{\mathcal{A}}$ and $\mathcal{D}_{\mathcal{L}}$. Therefore the element
$$
\begin{vmatrix}
\psi_{AL}(x_0) & 3 \psi_{AL}(x_1) &3 \psi_{AL}(x_2) & \psi_{AL}(x_3) &0 \\
0 & \psi_{AL}(x_0) & 3\psi_{AL}( x_1) &3 \psi_{AL}(x_2) & \psi_{AL}(x_3) \\
3\psi_{AL}(x_0) & 6 \psi_{AL}(x_1) &3 \psi_{AL}(x_2) & 0 &0 \\
0&3\psi_{AL}(x_0) & 6 \psi_{AL}(x_1) &3\psi_{AL}( x_2) & 0  \\
0&0&3\psi_{AL}(x_0) & 6 \psi_{AL}(x_1) &3 \psi_{AL}(x_2)  \\
\end{vmatrix}=\begin{vmatrix}
x_0 & 3 x_1 &3 x_2 & x_3+3 x_1 &0 \\
0 & x_0 & 3 x_1 &3 x_2 & x_3+3 x_1 \\
3x_0 & 6 x_1 &3 x_2 & 0 &0 \\
0&3x_0 & 6 x_1 &3 x_2 & 0  \\
0&0&3x_0 & 6 x_1 &3 x_2  \\
\end{vmatrix},
$$
lies in the kernel of the operator  $\mathcal{D}_{\mathcal{L}}$ and defines the following identity for the Lucas polynomial:

$$
\begin{vmatrix}
L_0(x) & 3 L_1(x) &3 L_2(x) & L_3(x)+3 L_1(x) &0 \\
0 & L_0(x) & 3 L_1(x) &3 L_2(x) & L_3(x)+3 L_1(x) \\
3L_0(x) & 6 L_1(x) &3 L_2(x) & 0 &0 \\
0&3L_0(x) & 6 L_1(x) &3 L_2(x) & 0  \\
0&0&3L_0(x) & 6 L_1(x) &3 L_2(x)  \\
\end{vmatrix}=-864.
$$

In the  paper we present methods of the theory of locally  nilpotent  derivation  to find  polynomial  identities for  the Fibonacci and Lucal polynomials.

In section 2,we give  a brief introduction to the theory of localy nilpotent derivations. Also, we  introduce the notion of the Fibonacci and Lucas derivations and find its kernels. In  this way we obtain  some  polynomials identities for the Fibonacci and Lucas polynomials.

In the section 3 we find a $(\mathcal{D}_{\mathcal{A}},\mathcal{D}_{\mathcal{F}})$-interwining map and a $(\mathcal{D}_{\mathcal{A}},\mathcal{D}_{\mathcal{L}})$-interwining map.


\section{Fibonacci and Lucas derivations }

\subsection{Derivations and its kernels} Let   $\mathbb{C}[x_0,x_1,x_2,\ldots,x_n]$ be the polynomial algebra  in  $n+1$ variables $x_0,x_1,x_2,\ldots,x_n$ over $\mathbb{C}.$ Recall  that a  {\it derivation} of the polynomial algebra $\mathbb{C}[x_0,x_1,x_2,\ldots,x_n]$ is a linear  map  $D$ satisfying the Leibniz rule: 
$$
D(x_1 \, x_2)=D(x_1) x_2+x_1 D(x_2), \text{  for all }  x_1, x_2 \in \mathbb{C}[x_0,x_1,x_2,\ldots,x_n].
$$
A derivation $D$  is called {\it locally nilpotent} if for every $f \in \mathbb{C}[x_0,x_1,x_2,\ldots,x_n]$ there is an $n \in \mathbb{N}$ such that $D^n(f)=0.$ 
Any derivation   $D$ is completely determined by the elements $D(x_i).$ A  derivation   $D$  is called  \textit{linear} if  $D(x_i)$ is a linear form. A  linear locally nilpotent derivation is called a \textit{Weitzenb\"ock derivation}. 
A derivation    $D$ 
is called the triangular if  $D(x_i) \in \mathbb{C}[x_0,\ldots,x_{i-1}].$ Any triangular derivation is locally nilpotent.

The subalgebra 
$$
\ker D:=\left \{ f \in \mathbb{C}[x_0,x_1,x_2,\ldots,x_n] \mid  D(f)=0 \right \},
$$
is called the {\it kernel} of derivation $D.$

For arbitrary localy niplotent derivation  $D$ tye following statement holds:
\begin{theorem} \label{maint} Suppose that there exists  a polynomials $h$ such that  $D(h) \neq 0$ but $D^2(h)=0.$ Then
$$
\ker D=\mathbb{C}[\sigma(x_0),\sigma(x_2),\ldots,\sigma(x_n)][D(h)^{-1}] \cap \mathbb{C}[x_0,x_1,\ldots,x_n],
$$
where $\sigma$ is the Diximier map 
$$
\sigma(x_i)=\sum_{k=0}^{\infty} D^k(x_i) \frac{\lambda^k}{k!},\lambda=-\dfrac{h}{D(h)}, D(\lambda)=-1.
$$
\end{theorem}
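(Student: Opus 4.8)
The plan is to reduce the statement to the classical \emph{slice theorem} for locally nilpotent derivations by passing to a suitable localization. First I would record that $f:=D(h)$ lies in $\ker D$, since $D(f)=D^2(h)=0$, and that $f\neq 0$ by hypothesis. Localizing the domain $B:=\mathbb{C}[x_0,\ldots,x_n]$ at $f$ gives $B_f=\mathbb{C}[x_0,\ldots,x_n,f^{-1}]$, to which $D$ extends uniquely as a locally nilpotent derivation with $\ker(D|_{B_f})=(\ker D)_f=:A_f$. A direct computation, using $D(f^{-1})=-f^{-2}D(f)=0$, shows $D(\lambda)=D(-h/f)=-D(h)/f=-1$; hence $s:=-\lambda$ is a \emph{slice} for $D$ in $B_f$, i.e. $D(s)=1$.

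Next I would invoke the structure supplied by a slice: since $D(s)=1$, the ring $B_f$ is a polynomial ring $A_f[s]$ over its own kernel, and the Dixmier map $\sigma=\exp(\lambda D)=\exp(-sD)$ is the projection of $B_f$ onto $A_f$ that fixes $A_f$ pointwise and sends $s\mapsto 0$. The crucial point is that $\sigma$ is a \emph{ring homomorphism} onto $\ker D$: from the Taylor identity $\sum_{k\ge0}\frac{c^k}{k!}D^k(b)=b|_{s\mapsto s+c}$, valid in $A_f[s][c]$, specializing $c=-s=\lambda$ yields $\sigma(b)=b|_{s\mapsto 0}$, the evaluation homomorphism $A_f[s]\to A_f$. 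Local nilpotency of $D$ guarantees that $\sigma$ is a well-defined finite sum, and $\sigma(a)=a$ for every $a\in A_f$, so $\sigma$ is surjective onto $A_f$.

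With these facts in hand the identity follows quickly. Since $B_f$ is generated as a $\mathbb{C}$-algebra by $x_0,\ldots,x_n$ together with $f^{-1}$, and $\sigma$ is a surjective homomorphism onto $A_f$ fixing the kernel element $f^{-1}$, I obtain $A_f=\sigma(B_f)=\mathbb{C}[\sigma(x_0),\ldots,\sigma(x_n)][f^{-1}]$. It then remains to descend from $A_f$ back to $\ker D$. Here I would use that the localization map $B\hookrightarrow B_f$ is injective and that $D$ on $B$ is the restriction of its extension, whence $A=A_f\cap B$: an element $g\in B$ lies in $A_f$ exactly when $D(g)=0$ in $B_f$, i.e. exactly when $g\in\ker D$. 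Combining these, $\ker D=\mathbb{C}[\sigma(x_0),\ldots,\sigma(x_n)][f^{-1}]\cap B$, as claimed.

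The main obstacle is the slice-theorem input of the second paragraph: verifying that $\sigma$ is genuinely a ring homomorphism whose image is exactly $\ker(D|_{B_f})$, despite $\lambda$ not being $D$-constant. The Taylor-formula specialization $c\mapsto\lambda$ is the device that makes this rigorous, and it simultaneously produces the polynomial presentation $B_f=A_f[s]$. A secondary point demanding care is the pair of identities $\ker(D|_{B_f})=(\ker D)_f$ and $A=A_f\cap B$, both of which rest on $B$ being a domain and on $f$ being a nonzero element of $\ker D$.
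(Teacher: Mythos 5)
Your proof is correct, and it is essentially the same argument as the one behind the paper's statement: the paper itself supplies no proof of Theorem \ref{maint}, deferring to Nowicki and van den Essen, and your route --- localize at $f=D(h)\in\ker D$, note $D(\lambda)=-1$ so $-\lambda$ is a slice, use the slice theorem to present $B_f=A_f[s]$ with the Dixmier map $\exp(\lambda D)$ as the evaluation homomorphism onto $A_f$, then descend via $\ker D=A_f\cap B$ --- is precisely the standard slice-theorem/Dixmier-map proof found in those references.
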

The  proof one may find   in \cite{Now} and \cite{Essen}. 

The expressions  (\ref{der1}),(\ref{der2}) motivate the following definition  
\begin{definition}
Derivations of  $\mathbb{C}[x_0,x_1,x_2,\ldots,x_n]$  defined by 
 \begin{align*}
&D_\mathcal{F}(x_0)=D_\mathcal{F}(x_1)=0, D_\mathcal{F}(x_n)=\sum_{k=0}^{\left[\frac{n-1}{2} \right]} (-1)^k ({n{-}1{-}2k})x_{n{-}1{-}2k},\\
&D_\mathcal{L}(x_0)=0, D_\mathcal{L}(x_n)=n \sum_{k=0}^{\left[\frac{n-1}{2} \right]} (-1)^k x_{n{-}1{-}2k}, i=2,3,\ldots, n, \ldots.,
\end{align*}
are called  {\bf the Fibonacci derivation} and {\bf the Lucas derivation}  respectivelly
\end{definition}
We  have
$$
\begin{array}{ll}
D_\mathcal{F}(x_0)=0,& D_\mathcal{L}(x_0)=0,\\
D_\mathcal{F}(x_1)=0,&D_\mathcal{L}(x_1)=x_0,\\
 D_\mathcal{F}(x_2)=x_1,& D_\mathcal{L}(x_2)=2x_1\\
 D_\mathcal{F}(x_3)=2x_2,& D_\mathcal{L}(x_3)=3\,x_{{2}}-3\,x_{{0}},\\
 D_\mathcal{F}(x_4)=3\,x_{{3}}-x_{{1}},& D_\mathcal{L}(x_4)=4\,x_{{3}}-4\,x_{{1}},\\
 D_\mathcal{F}(x_5)=4\,x_{{4}}-2\,x_{{2}},& D_\mathcal{L}(x_5)=5\,x_{{4}}-5\,x_{{2}}\\
 D_\mathcal{F}(x_6)=5\,x_{{5}}-3\,x_{{3}}+x_{{1}}, & D_\mathcal{L}(x_6)=6\,x_{{5}}-6\,x_{{3}}+6x_{{1}}.
\end{array}
$$

We define  the substitution homomorphisms
 $\varphi_{\mathcal{F}},\varphi_{\mathcal{L}}:\mathbb{C}[x_0,x_1,\ldots,x_n] \to \mathbb{C}[x]  $  by   $\varphi_{\mathcal{F}}(x_i)=F_i(x)$ and by 
$\varphi_{\mathcal{L}}(x_i)=L_i(x)$.
Put
\begin{gather*}
\ker \varphi_{\mathcal{F}}:=\{ P ( x_0,x_1,...,x_n)  \mid \varphi_{\mathcal{F}}(P( x_0,x_1,...,x_n)) =0 \},\\
\ker \varphi_{\mathcal{L}}:=\{ P( x_0,x_1,...,x_n)  \mid \varphi_{\mathcal{L}}\,(P( x_0,x_1,...,x_n)) =0 \}.
\end{gather*}
Any element of $\ker \varphi_{\mathcal{F}}$  or $\ker \varphi_{\mathcal{F}}$  defines a polynomial identity for the Fibonacci and Lucas polynomials відповідно. Put 
\begin{gather*}
\ker \mathcal{D}_{\mathcal{F}}:=\{ S \in  \mathbb{C}[ x_0,x_1,...,x_n]  \mid \mathcal{D}_{\mathcal{F}}(S) =0 \},\\
\ker \mathcal{D}_{\mathcal{L}}:=\{ S \in  \mathbb{C}[ x_0,x_1,...,x_n]  \mid \mathcal{D}_{\mathcal{L}}(S) =0 \} .
\end{gather*}
It is  easy to see that   $\varphi_{\mathcal{F}}  \mathcal{D}_{\mathcal{F}} =\dfrac{d}{dx}\, \varphi_{\mathcal{F}}$  and $\varphi_{\mathcal{L}}  \mathcal{D}_{\mathcal{L}} =\dfrac{d}{dx}\, \varphi_{\mathcal{L}}$.  It follows that $\varphi_{\mathcal{F}} (\ker \mathcal{D}_{\mathcal{F}})\subset \ker \varphi_{\mathcal{F}}$  and $\varphi_{\mathcal{L}} (\ker \mathcal{D}_{\mathcal{L}})\subset \ker \varphi_{\mathcal{L}}$. 
Note that $\varphi_{\mathcal{L}} (\ker \mathcal{D}_{\mathcal{L}}) \neq  \ker \varphi_{\mathcal{L}}$. In fact, we  have  $\varphi_{\mathcal{F}}(x_n-x_2 x_{n-1}+x_{n-2})=F_n(x)-(F_2(x) F_{n-1}(x)+F_{n-2}(x))=0$  but $x_n-x_2 x_{n-1}-x_{n-2} \notin \ker \mathcal{D}_{\mathcal{F}}.$


We have thus proved the following theorem. 
\begin{theorem}  Let  $P(x_0,x_1,\ldots,x_n)$ be a polynomial. 

(i) If $\mathcal{D}_{\mathcal{F}}(P(x_0,x_1,\ldots,x_n))=0$ then  $P(F_0(x),F_1(x),\ldots, F_n(x))=\rm{const}$;

(ii) if  $\mathcal{D}_{\mathcal{L}}(P(x_0,x_1,\ldots,x_n))=0$ then $P(L_0(x),L_1(x),\ldots, L_n(x))=\rm{const}$.
\end{theorem}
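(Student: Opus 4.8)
The plan is to reduce the statement to the single commutation (intertwining) relation
$$
\frac{d}{dx}\circ \varphi_{\mathcal{F}} = \varphi_{\mathcal{F}}\circ \mathcal{D}_{\mathcal{F}},
\qquad
\frac{d}{dx}\circ \varphi_{\mathcal{L}} = \varphi_{\mathcal{L}}\circ \mathcal{D}_{\mathcal{L}},
$$
already announced in the text, and then to invoke the elementary fact that a univariate polynomial with vanishing derivative is constant. Once the relation is in hand, part (i) follows at once: if $\mathcal{D}_{\mathcal{F}}(P)=0$, then
$$
\frac{d}{dx}\,P(F_0(x),\ldots,F_n(x)) = \frac{d}{dx}\,\varphi_{\mathcal{F}}(P) = \varphi_{\mathcal{F}}\bigl(\mathcal{D}_{\mathcal{F}}(P)\bigr) = \varphi_{\mathcal{F}}(0) = 0,
$$
so $P(F_0(x),\ldots,F_n(x))$ is a constant; part (ii) is identical with $\mathcal{L}$ in place of $\mathcal{F}$.

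The substance therefore lies in verifying the commutation relation, and I would do this by checking it on generators. The point is that both composite maps $\frac{d}{dx}\circ\varphi_{\mathcal{F}}$ and $\varphi_{\mathcal{F}}\circ\mathcal{D}_{\mathcal{F}}$ are \emph{$\varphi_{\mathcal{F}}$-derivations}, i.e.\ linear maps $\delta\colon \mathbb{C}[x_0,\ldots,x_n]\to\mathbb{C}[x]$ obeying the twisted Leibniz rule $\delta(fg)=\delta(f)\varphi_{\mathcal{F}}(g)+\varphi_{\mathcal{F}}(f)\delta(g)$. For $\varphi_{\mathcal{F}}\circ\mathcal{D}_{\mathcal{F}}$ this follows from the Leibniz rule for the derivation $\mathcal{D}_{\mathcal{F}}$ together with the multiplicativity of $\varphi_{\mathcal{F}}$; for $\frac{d}{dx}\circ\varphi_{\mathcal{F}}$ it is the ordinary product rule applied to $\varphi_{\mathcal{F}}(f)\varphi_{\mathcal{F}}(g)$. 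Two $\varphi_{\mathcal{F}}$-derivations that agree on the generators $x_0,\ldots,x_n$ agree on every monomial, hence on all of $\mathbb{C}[x_0,\ldots,x_n]$; so it suffices to compare the two maps on each $x_i$.

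On a single generator the comparison is exactly the defining derivative formula. Indeed $\varphi_{\mathcal{F}}(\mathcal{D}_{\mathcal{F}}(x_i)) = \sum_{k}(-1)^k(i-1-2k)F_{i-1-2k}(x)$ by the definition of $\mathcal{D}_{\mathcal{F}}$, while $\frac{d}{dx}\varphi_{\mathcal{F}}(x_i)=\frac{d}{dx}F_i(x)$ equals the same sum by formula (\ref{der1}); the two sides coincide, and the analogous computation for $\mathcal{D}_{\mathcal{L}}$ uses (\ref{der2}). This closes the argument. There is no genuine obstacle here: the only points requiring care are the bookkeeping of the twisted Leibniz rule, which legitimizes the reduction to generators, and keeping straight that in the Lucas case $\mathcal{D}_{\mathcal{L}}$ acts nontrivially on $x_1$ (whereas $\mathcal{D}_{\mathcal{F}}(x_1)=0$), so that the base cases of the generator check differ slightly between the two derivations.
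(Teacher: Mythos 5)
Your proposal is correct and takes essentially the same approach as the paper: everything reduces to the intertwining relation $\varphi_{\mathcal{F}}\,\mathcal{D}_{\mathcal{F}}=\frac{d}{dx}\,\varphi_{\mathcal{F}}$ (and its Lucas analogue), verified on the generators $x_i$ via the derivative formulas (\ref{der1}) and (\ref{der2}), after which one invokes the fact that a polynomial with vanishing derivative is constant. The only divergence is cosmetic: the paper justifies the identity by the multivariate chain rule, while you use the equivalent twisted-Leibniz ($\varphi$-derivation) argument to reduce to generators.
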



\subsection{The kernel of the Fibonacci derivation}

It is obviously that this derivation is triangular and hence   localy nilpotent. Thus to find its kernel we may use  Theorem  \ref{maint}.

Let us construct  the Diximier map for the Fibonacci derivation. For this purpose, we derive first a close expression for  $D^k_\mathcal{F}(x_n).$

We  have
\begin{gather*}
D^2_\mathcal{F}(x_n)=D_\mathcal{F}\left(\sum_{k=0}^{\left[\frac{n-1}{2} \right]} (-1)^k ({n{-}1{-}2k})x_{n{-}1{-}2k}\right)=\\=
\sum_{k=0}^{\left[\frac{n-1}{2} \right]} (-1)^k ({n{-}1{-}2k})\sum_{j=0}^{\left[\frac{n-2-2k}{2} \right]} (-1)^j ({n{-}2{-}2k}-2j)x_{n{-}2{-}2j}=
\\=\sum_{i=0}^{\left[\frac{n-2}{2} \right]}  (-1)^{i} (i+1) (n-2-2i) (n-(i+1))x_{n-2-2i}.
\end{gather*}
Similarly we  get
\begin{gather*}
D^3_\mathcal{F}(x_n)=\sum_{i=0}^{\left[\frac{n-3}{2} \right]}  (-1)^i \frac{(i+1)(i+2)}{2} (n-3-2i)(n-(i+1))(n-(i+2))x_{n-3-2i}.
\end{gather*}
Induction gives
\begin{gather*}
D^k_\mathcal{F}(x_n)=(k-1)! \sum_{i=0}^{\left[\frac{n-k}{2} \right]}  (-1)^i (n-k-2i) {i+k-1 \choose k-1} {n-i-1 \choose k-1} x_{n-k-2i}.
\end{gather*}
Since  $D_\mathcal{F}\left(-\displaystyle  \frac{x_2}{x_1} \right)=-1$  we put $\lambda=-\displaystyle  \frac{x_2}{x_1}.$ Now we may find the Diximier map:
\begin{gather*}
\sigma(x_n)=\sum_{k=0}^{n-1}D^k_\mathcal{F}(x_n) \frac{\lambda^k}{k!}=\\
=x_n+\sum_{k=1}^{n-1} \frac{\lambda^k}{k} \sum_{i=0}^{\left[\frac{n-k}{2}\right]}  (-1)^{k+i} (n-k-2i) {i+k-1 \choose k-1} {n-i-1 \choose k-1} x_{n-k-2i}=\\=
x_n+\sum_{k=1}^{n-3} \frac{\lambda^k}{k} \sum_{i=0}^{\left[\frac{n-k}{2}\right]}  (-1)^{i} (n{-}k{-}2i) {i+k-1 \choose k-1} {n-i-1 \choose k-1} x_{n-k-2i}+(n-1)x_2 \lambda^{n-2}+x_1 \lambda^{n-1}.
\end{gather*}
Replasing  $\lambda$ by $-\displaystyle  \frac{x_2}{x_1} $, we obtain, after   simplifying:
\begin{multline*}
x_1^{n-2} \sigma(x_n)=\\=  x_n x_1^{n-2}+\sum_{k=1}^{n-3} \frac{1}{k} \sum_{i=0}^{\left[\frac{n-k}{2}\right]}   (-1)^{k+i} (n{-}k{-}2i) {i{+}k{-}1 \choose k{-}1} {n{-}i{-}1 \choose k{-}1} x_{n{-}k{-}2i} x_2^k x_1^{n{-}2{-}k}+(n-2) (-1)^{n-2}x_2^{n-1}.
\end{multline*}
The polynomials
\begin{multline*}
C_n:=\\=x_n x_1^{n-2}+\sum_{k=1}^{n-3} \frac{1}{k} \sum_{i=0}^{\left[\frac{n-k}{2}\right]}   (-1)^{k+i} (n{-}k{-}2i) {i{+}k{-}1 \choose k{-}1} {n{-}i{-}1 \choose k{-}1} x_{n{-}k{-}2i} x_2^k x_1^{n{-}2{-}k}{+}\\+(n{-}2) (-1)^{n{-}2}x_2^{n-1}, n>2,
\end{multline*}
belong to the kernel $\ker \mathcal{D}_{\mathcal{F}}.$ We call them   \textit{the Cayley elements} of the localy nilpotent derivation $\mathcal{D}_{\mathcal{F}}$.  
The first few Cayley polynomials are shown below:
\begin{align*}
&C_3=-{x_{{2}}}^{2}+x_{{3}}x_{{1}},\\
&C_4=2\,{x_{{2}}}^{3}-3\,x_{{2}}x_{{3}}x_{{1}}+{x_{{1}}}^{2}x_{{2}}+x_{{4}}
{x_{{1}}}^{2},\\
&C_5=-3\,{x_{{2}}}^{4}+6\,{x_{{2}}}^{2}x_{{3}}x_{{1}}-{x_{{1}}}^{2}{x_{{2}}
}^{2}-4\,x_{{2}}x_{{4}}{x_{{1}}}^{2}+x_{{5}}{x_{{1}}}^{3},\\
&C_6=4\,{x_{{2}}}^{5}-10\,{x_{{2}}}^{3}x_{{3}}x_{{1}}-2\,{x_{{1}}}^{2}{x_{{
2}}}^{3}+10\,{x_{{2}}}^{2}x_{{4}}{x_{{1}}}^{2}-5\,x_{{2}}x_{{5}}{x_{{1
}}}^{3}+3\,{x_{{1}}}^{3}x_{{2}}x_{{3}}-{x_{{1}}}^{4}x_{{2}}+x_{{6}}{x_
{{1}}}^{4}.
\end{align*}

Theorem \ref{maint} implies

\begin{theorem}
\begin{align*}
&\ker {\mathcal{D}_{\mathcal{F}}}=\mathbb{C}[x_0,x_1,C_3,C_4,\ldots,C_n][x_1^{-1}] \cap \mathbb{C}[x_0,x_1,\ldots,x_n].
\end{align*}
\end{theorem}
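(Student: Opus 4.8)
The plan is to read the statement off from Theorem \ref{maint}, since every ingredient needed to instantiate that theorem has already been assembled above. First I would note that $\mathcal{D}_\mathcal{F}$ is triangular and therefore locally nilpotent, so the theorem applies. The theorem requires a polynomial $h$ with $\mathcal{D}_\mathcal{F}(h)\neq 0$ and $\mathcal{D}_\mathcal{F}^2(h)=0$; the natural slice is $h=x_2$, since $\mathcal{D}_\mathcal{F}(x_2)=x_1\neq 0$ and $\mathcal{D}_\mathcal{F}^2(x_2)=\mathcal{D}_\mathcal{F}(x_1)=0$. Thus $\mathcal{D}_\mathcal{F}(h)=x_1$, the localizing element is $x_1$, and the associated parameter is $\lambda=-x_2/x_1$ with $\mathcal{D}_\mathcal{F}(\lambda)=-1$, exactly the choice made in the construction of the Dixmier map preceding the statement.

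With these data Theorem \ref{maint} gives $\ker\mathcal{D}_\mathcal{F}=\mathbb{C}[\sigma(x_0),\sigma(x_1),\ldots,\sigma(x_n)][x_1^{-1}]\cap\mathbb{C}[x_0,\ldots,x_n]$, so the remaining task is purely to identify the subalgebra generated by the Dixmier images. From $\mathcal{D}_\mathcal{F}(x_0)=\mathcal{D}_\mathcal{F}(x_1)=0$ I get $\sigma(x_0)=x_0$ and $\sigma(x_1)=x_1$, the latter being the localizing element itself; the expansion of $\sigma(x_2)$ terminates after two terms and collapses, $\sigma(x_2)=x_2+x_1\lambda=0$. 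For $n\geq 3$ the closed form for $D^k_\mathcal{F}(x_n)$ established above, inserted into the definition of $\sigma$ and simplified with $\lambda=-x_2/x_1$, is exactly the computation already displayed, yielding $x_1^{n-2}\sigma(x_n)=C_n$, i.e.\ $\sigma(x_n)=C_n\,x_1^{-(n-2)}$.

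It then remains to check that $\mathbb{C}[\sigma(x_0),\ldots,\sigma(x_n)][x_1^{-1}]=\mathbb{C}[x_0,x_1,C_3,\ldots,C_n][x_1^{-1}]$. This is immediate once $x_1$ has been inverted: the former is generated by $x_0$, $x_1$, the vanishing element $\sigma(x_2)$, and the elements $C_n\,x_1^{-(n-2)}$, while the two relations $C_n=x_1^{n-2}\sigma(x_n)$ and $\sigma(x_n)=x_1^{-(n-2)}C_n$ show that each generating set lies in the localized algebra generated by the other, because $x_1$ is a unit. Intersecting with $\mathbb{C}[x_0,\ldots,x_n]$ then delivers the claimed description of $\ker\mathcal{D}_\mathcal{F}$.

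The hard part is not in this final theorem at all but in the two facts imported from above: the inductive proof of the closed form for $D^k_\mathcal{F}(x_n)$ and the algebraic simplification collapsing $x_1^{n-2}\sigma(x_n)$ to $C_n$. Granting those, the proof reduces to the bookkeeping of clearing the unit $x_1$ from the denominators of the Dixmier images and invoking Theorem \ref{maint}.
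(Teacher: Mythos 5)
Your proposal is correct and follows essentially the same route as the paper: verify that $\mathcal{D}_{\mathcal{F}}$ is triangular hence locally nilpotent, take the slice $h=x_2$ with $\lambda=-x_2/x_1$, compute the Dixmier images via the closed form for $D^k_{\mathcal{F}}(x_n)$, clear the denominators $x_1^{n-2}$ to obtain the Cayley elements $C_n$, and invoke Theorem \ref{maint}. Your write-up is in fact slightly more careful than the paper's, since you explicitly note $\sigma(x_1)=x_1$, $\sigma(x_2)=0$, and check that the two localized algebras coincide before intersecting with $\mathbb{C}[x_0,\ldots,x_n]$.
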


Thus, we obtain  a description of the kernel of the Fibonacci derivation.

To get an identity for the Fibonacci polynomials we should find  $\varphi_F(C_n).$  We  get 
\begin{multline*}
\varphi_F(C_n)=\varphi_F(C_n(x_0,x_1,\ldots,x_n))=
F_n(x)+\\+\sum_{k=1}^{n-3} \frac{1}{k} \sum_{i=0}^{\left[\frac{n-k}{2}\right]}   (-1)^{k+i} (n{-}k{-}2i) {i{+}k{-}1 \choose k{-}1} {n{-}i{-}1 \choose k{-}1} F_{n{-}k{-}2i}(x) F_2(x)^k {+}\\+(n{-}2) (-1)^{n{-}2}F_2(x)^{n-1}={\rm const }.
\end{multline*}

\textbf{Conjecture.} $\varphi_F(C_n)=C_n(F_1(x),\ldots,F_n(x))=\left\{ \begin{array}{ll} 0, &  n  \text{ even},\\ 1, & n \text{ odd}. \end{array} \right.$

\subsection{Ядро диференціювання Люка} Для  диференціювання   $D_\mathcal{L}$    отримаємо схожим способом 
\begin{gather*}
D^k_\mathcal{L}(x_n)=n\, (k-1)!  \sum_{i=0}^{\left[\frac{n-k}{2}\right]}    (-1)^i {i+k-1 \choose k-1} {n-i-1 \choose k-1} x_{n-k-2i}.
\end{gather*}

Маємо, що $D_\mathcal{L}\left(-\displaystyle  \frac{x_1}{x_0} \right)=-1.$  Шукаємо  відображення Діксм'є:
\begin{gather*}
\sigma(x_n)=\sum_{k=0}^{n}  D_\mathcal{L}(x_n) \frac{\lambda^i}{k!}=\\
=x_n+n\,\sum_{k=1}^{n-2} \frac{\lambda^k}{k} \sum_{n-k-2i>0}  (-1)^{k+i}  {i+k-1 \choose k-1} {n-i-1 \choose k-1} x_{n-k-2i}+n x_1 \lambda^{n-1}+x_0 \lambda^n
\end{gather*}
Відповідний  елемент Келлі має вигляд
 $$
\sigma_n(x_n)=x_n x_0^{n-1}+n\,\sum_{k=1}^{n-2} \frac{1}{k} \sum_{i=0}^{\left[\frac{n-k}{2}\right]}  (-1)^{k+i}  {i+k-1 \choose k-1} {n-i-1 \choose k-1} x_{n-k-2i} x_1^k x_0^{n-1-k}+(n-1) (-1)^{n-1} x_1^n,
$$

\begin{theorem}
\begin{align*}
&k(x_0,x_1,\ldots,x_n)^{\mathcal{D}_{\mathcal{L}}}=k[x_0,C_2,C_3,C_4,\ldots,C_n][x_0^{-1}],\\
&k[x_0,x_1,\ldots,x_n]^{\mathcal{D}_{\mathcal{L}}}=k[x_0,C_2,C_3,C_4,\ldots,C_n][x_0^{-1}] \cap k[x_0,x_1,\ldots,x_n].
\end{align*}
\end{theorem}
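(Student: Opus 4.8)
The plan is to apply Theorem~\ref{maint} to the derivation $D=\mathcal{D}_{\mathcal{L}}$, following verbatim the scheme already carried out for the Fibonacci derivation. Since $D_{\mathcal{L}}(x_n)$ is a linear combination of the variables $x_{n-1},x_{n-2},\dots$ of strictly lower index, $\mathcal{D}_{\mathcal{L}}$ is triangular and hence locally nilpotent, so Theorem~\ref{maint} applies. The local slice is furnished by $h=x_1$: from $D_{\mathcal{L}}(x_0)=0$ and $D_{\mathcal{L}}(x_1)=x_0$ we get $D_{\mathcal{L}}(h)=x_0\neq 0$ and $D^2_{\mathcal{L}}(h)=0$, so that $\lambda=-h/D_{\mathcal{L}}(h)=-x_1/x_0$ satisfies $D_{\mathcal{L}}(\lambda)=-1$, as already observed.

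First I would justify the closed expression for $D^k_{\mathcal{L}}(x_n)$ stated before the theorem by induction on $k$. Applying $D_{\mathcal{L}}$ to the $k$-th power, substituting $D_{\mathcal{L}}(x_m)=m\sum_j(-1)^j x_{m-1-2j}$ for each surviving variable $x_m$, and re-indexing the resulting double sum by the position $n-(k+1)-2s$ of the surviving variable, the step from $k$ to $k+1$ reduces to the single binomial identity
$$\sum_{i=0}^{s}(n-k-2i){i+k-1 \choose k-1}{n-i-1 \choose k-1}=k{s+k \choose k}{n-s-1 \choose k},$$
which is a Chu--Vandermonde-type convolution and can be verified, e.g., by a standard WZ/Zeilberger argument (the case $k=1$ already gives $(s+1)(n-1-s)$ on both sides). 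This is the very computation performed for $\mathcal{D}_{\mathcal{F}}$, now carrying the extra scalar factor $n$.

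Next I would feed this formula into the Dixmier map $\sigma(x_n)=\sum_{k\ge 0}D^k_{\mathcal{L}}(x_n)\,\lambda^k/k!$, substitute $\lambda=-x_1/x_0$, and clear denominators by multiplying through by $x_0^{n-1}$; this produces exactly the polynomial $C_n=\sigma_n(x_n)$ displayed before the theorem, so that $\sigma(x_n)=C_n\,x_0^{-(n-1)}$ for $n\ge 2$ (in particular $C_2=x_0x_2-x_1^2$). I would also record the two boundary values $\sigma(x_0)=x_0$, since $D_{\mathcal{L}}(x_0)=0$, and $\sigma(x_1)=x_1+\lambda D_{\mathcal{L}}(x_1)=x_1-(x_1/x_0)x_0=0$; the vanishing of $\sigma(x_1)$ is precisely why the generator $\sigma(x_1)$ is absent from the list in Theorem~\ref{maint}.

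With these data in hand, Theorem~\ref{maint} gives
$$\ker\mathcal{D}_{\mathcal{L}}=\mathbb{C}[\sigma(x_0),\sigma(x_2),\dots,\sigma(x_n)][x_0^{-1}]\cap\mathbb{C}[x_0,x_1,\dots,x_n],$$
and it remains only to identify the localized polynomial ring. Since $\sigma(x_0)=x_0$ and $\sigma(x_m)=C_m\,x_0^{-(m-1)}$, once $x_0^{-1}$ is adjoined each $C_m=x_0^{m-1}\sigma(x_m)$ lies in the left-hand ring and each $\sigma(x_m)=x_0^{-(m-1)}C_m$ lies in the right-hand ring, so
$$\mathbb{C}[\sigma(x_0),\sigma(x_2),\dots,\sigma(x_n)][x_0^{-1}]=\mathbb{C}[x_0,C_2,C_3,\dots,C_n][x_0^{-1}].$$
Reading this equality on the localization $\mathbb{C}[x_0,\dots,x_n][x_0^{-1}]$ yields the first asserted identity, and intersecting with $\mathbb{C}[x_0,\dots,x_n]$ yields the second. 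The main obstacle is the inductive proof of the closed form for $D^k_{\mathcal{L}}(x_n)$ together with the binomial simplification collapsing the Dixmier series into the compact polynomial $C_n$; the remaining steps are formal bookkeeping, entirely parallel to the Fibonacci case already treated.
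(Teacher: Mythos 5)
Your proposal is correct and follows essentially the same route as the paper: observe that $\mathcal{D}_{\mathcal{L}}$ is triangular hence locally nilpotent, take the slice $h=x_1$ with $D_{\mathcal{L}}(x_1)=x_0$ and $\lambda=-x_1/x_0$, use the closed form for $D^k_{\mathcal{L}}(x_n)$ to compute the Dixmier map, clear denominators by $x_0^{n-1}$ to obtain the Cayley elements $C_n$, and invoke Theorem~\ref{maint}. In fact you supply details the paper leaves implicit (the induction behind the closed form for the iterates, the boundary values $\sigma(x_0)=x_0$, $\sigma(x_1)=0$, and the identification of the two localized rings), so the argument is, if anything, more complete than the paper's.
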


Випишемо після  спрощення  декілька цих многочленів
\begin{align*}
&C_1=x_0,\\
&C_2=x_{{2}}x_{{0}}-{x_{{1}}}^{2},\\
&C_3=2\,{x_{{1}}}^{3}+3\,x_{{1}}{x_{{0}}}^{2}-3\,x_{{1}}x_{{2}}x_{{0}}+x_{{
3}}{x_{{0}}}^{2}
,\\
&C_4=-3\,{x_{{1}}}^{4}-4\,{x_{{1}}}^{2}{x_{{0}}}^{2}+6\,{x_{{1}}}^{2}x_{{2}
}x_{{0}}-4\,x_{{1}}x_{{3}}{x_{{0}}}^{2}+x_{{4}}{x_{{0}}}^{3}
,\\
&C_5=4\,{x_{{1}}}^{5}+10\,{x_{{1}}}^{2}x_{{3}}{x_{{0}}}^{2}-5\,x_{{1}}{x_{{0
}}}^{4}-5\,x_{{1}}x_{{4}}{x_{{0}}}^{3}-10\,{x_{{1}}}^{3}x_{{2}}x_{{0}}
+5\,x_{{1}}x_{{2}}{x_{{0}}}^{3}+x_{{5}}{x_{{0}}}^{4}
.
\end{align*} 
\textbf{Гіпотеза.} $C_n(L_0(x),\ldots,L_n(x))=\left\{ \begin{array}{ll} 2, &  n - \text{ парне}\\ 0, & \text{ інакше} \end{array} \right.$


\section{Appel-Lucas and Appel-Fibonacci    interwining maps}



\subsection{Appel-Lucas  interwinning map}


Denote       by  $\psi_{AL}$  a Appel-Lucas  intewinning map. Будемо шукати його у  вигляді:
$$
\psi_{AL}(x_n)=x_n+\alpha^{(1)}_n x_{n-2}+\alpha^{(2)}_n x_{n-4}+\ldots+\alpha^{(i)}_n x_{n-2i}+\ldots+\alpha_n^{\left( \left[ \frac{n-1}{2} \right]\right)} x_{n-2 \left[ \frac{n-1}{2} \right]}.
$$
Доведемо  таке твердження
\begin{lemma}
Послідовності  $\alpha^{(1)}_n,$ $\alpha^{(2)}_n,\ldots, \alpha_n^{\left( \left[ \frac{n-1}{2} \right]\right)}$  задовільняють такій системи рекурентних рівнянь 
$$
\left \{
\begin{array}{l} 
 (n-2) \alpha_n^{(1)}=n(\alpha_{n-1}^{(1)}+1),\\
(n-4) \alpha_n^{(2)}=n(\alpha_{n-1}^{(2)}+\alpha_{n-1}^{(1)}),\\
(n-2i) \alpha_n^{(i)}=n(\alpha_{n-1}^{(i)}+\alpha_{n-1}^{(i-1)}),\\
.................................................. \\
(n-2\left[\frac{n-1}{2} \right]) \alpha_n^{\left(\left[\frac{n-1}{2} \right]\right)}=n \left(\alpha_{n-1}^{\left(\left[\frac{n-1}{2} \right]\right)}+\alpha_{n-1}^{\left(\left[\frac{n-1}{2} \right]-1\right)}\right),
\end{array}
\right.
$$
із початковими  умовами $\alpha_{2i}^{(i)}=0.$
\end{lemma}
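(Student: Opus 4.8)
The plan is to impose the defining intertwining relation $\psi_{AL}\mathcal{D}_{\mathcal{A}}=\mathcal{D}_{\mathcal{L}}\psi_{AL}$ directly on the ansatz and to read off the constraints on the unknown coefficients by comparing, on each side, the coefficients of the monomials $x_{n-1-2j}$. Evaluating both operators on $x_n$ and using $\mathcal{D}_{\mathcal{A}}(x_n)=n\,x_{n-1}$, the left-hand side becomes simply $n\,\psi_{AL}(x_{n-1})=n\sum_{i\ge 0}\alpha_{n-1}^{(i)}\,x_{n-1-2i}$, where throughout I adopt the convention $\alpha_m^{(0)}=1$ coming from the leading term $x_n$ of $\psi_{AL}(x_n)$. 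For the right-hand side I would substitute the explicit formula $\mathcal{D}_{\mathcal{L}}(x_m)=m\sum_{k\ge 0}(-1)^k x_{m-1-2k}$ into $\mathcal{D}_{\mathcal{L}}\bigl(\sum_{i\ge 0}\alpha_n^{(i)}x_{n-2i}\bigr)$, so that it too is expressed as a linear combination of the $x_{n-1-2j}$.

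Next I would equate the coefficients of $x_{n-1-2j}$ on the two sides. The left-hand side contributes the single term $n\,\alpha_{n-1}^{(j)}$, while the right-hand side collects every $\alpha_n^{(i)}$ with $i\le j$ weighted by $(n-2i)(-1)^{j-i}$. This produces, for each $j$, the relation \[ E_j:\qquad n\,\alpha_{n-1}^{(j)}=\sum_{i=0}^{j}(n-2i)\,\alpha_n^{(i)}(-1)^{j-i}. \] For $j=0$ this is the trivial identity $n=n$; for $j\ge 1$ each $E_j$ is, by itself, an awkward alternating sum rather than the two-term recurrence we are after.

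The key step, and the one requiring the only genuine idea, is to add two consecutive equations $E_j+E_{j-1}$. Because of the alternating factor $(-1)^{j-i}$, every term with index $0\le i\le j-1$ occurs once with sign $(-1)^{j-i}$ and once with sign $(-1)^{j-1-i}$ and hence cancels, leaving only the top term $i=j$. This telescoping collapses both alternating sums to $(n-2j)\,\alpha_n^{(j)}$ and yields exactly $(n-2j)\,\alpha_n^{(j)}=n\bigl(\alpha_{n-1}^{(j)}+\alpha_{n-1}^{(j-1)}\bigr)$, which is the asserted system (with $j$ in the role of $i$); the case $j=1$ reproduces the first equation through $\alpha_{n-1}^{(0)}=1$. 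I expect spotting this cancellation to be the main obstacle, after which the computation is purely mechanical.

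Finally, the initial condition $\alpha_{2i}^{(i)}=0$ is not an additional hypothesis but a consequence of the chosen shape of $\psi_{AL}$: the sum defining $\psi_{AL}(x_n)$ is truncated at $i=\bigl[\tfrac{n-1}{2}\bigr]$, so for even $n=2i$ the term that would equal $x_0$ is simply absent, i.e. $\alpha_{2i}^{(i)}=0$. This vanishing supplies precisely the starting value needed to run the recurrence in $n$ for each fixed $i$ --- the smallest genuine coefficient $\alpha_{2i+1}^{(i)}$ being computed from it --- so that the system together with these initial values determines all the $\alpha_n^{(i)}$ uniquely.
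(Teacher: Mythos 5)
Your proposal is correct and follows essentially the same route as the paper: expand $\mathcal{D}_{\mathcal{L}}(\psi_{AL}(x_n))$, equate coefficients of $x_{n-1-2j}$ with those of $n\,\psi_{AL}(x_{n-1})$, and convert the resulting alternating-sum relations into the stated two-term recurrence. Your explicit telescoping step (adding $E_j+E_{j-1}$) is precisely the ``transformation'' the paper invokes but does not spell out, and your derivation of $\alpha_{2i}^{(i)}=0$ from the truncation of the ansatz matches the paper's initial conditions.
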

\begin{proof}
We have 
\begin{gather*}
D_\mathcal{L}(\psi_{AL}(x_n))=n(x_{n-1}-x_{n-3}+x_{n-5}-x_{n-7}+\ldots)+\alpha^{(1)}_n (n-2)(x_{n-3}-x_{n-5}+\ldots)+\\+\alpha^{(2)}_n (n-4)(x_{n-5}-x_{n-7}+\ldots)+\ldots
\end{gather*}
Звідси знаходимо
\begin{gather*}
D_\mathcal{L}(\psi_{AL}(x_n))=n x_{n-1}+x_{n-3}((n-2)\alpha^{(1)}_n -n)+x_{n-5}((n-4)\alpha^{(2)}_n-(n-2) \alpha^{(1)}_n+n)+\\+
x_{n-2i-1}((n-2i)\alpha^{(i)}_n-(n-2(i-1))\alpha^{(i-1)}_n+(n-2(i-2))\alpha^{(i-2)}_n+\ldots)+\ldots+
\end{gather*}
З іншого боку,  має виконуватися тотожність,
$$
D_\mathcal{L}(\psi_{AL}(x_n))=\psi_{AL}(\mathcal{D}(x_n))=n \psi_{AL}(x_{n-1})=n \sum_{n-1-2i>0} \alpha^{(i)}_{n-1} x_{n-1-2i}(-1)^i.
$$
Прирівнюючи відповідні коефіцієнти, отримуємо  такі рекурентні  співвідношення на  $\alpha^{(i)}_n:$
\begin{gather*}
(n-2)\alpha^{(1)}_n -n=n \alpha^{(1)}_{n-1}, \alpha^{(1)}_{2}=0,\\
(n-4)\alpha^{(2)}_n-(n-2) \alpha^{(1)}_n+n=n \alpha^{(2)}_{n-1},\alpha_4^{(1)}=0,\\
\ldots\\
\sum_{n-2i>0} (n-2i) \alpha^{(i)}_n (-1)^i =n \alpha^{(i)}_{n-1}, \alpha_{2i}^{(i)}=0.
\end{gather*}
Після  спрощення  отримуємо  систему рекурентних рівнянь
$$
\left \{
\begin{array}{l} 
 (n-2) \alpha_n^{(1)}=n(\alpha_{n-1}^{(1)}+1),\\
(n-4) \alpha_n^{(2)}=n(\alpha_{n-1}^{(2)}+\alpha_{n-1}^{(1)}),\\
(n-2i) \alpha_n^{(i)}=n(\alpha_{n-1}^{(i)}+\alpha_{n-1}^{(i-1)}),\\
.................................................. \\
(n-2\left[\frac{n-1}{2} \right]) \alpha_n^{\left(\left[\frac{n-1}{2} \right]\right)}=n \left(\alpha_{n-1}^{\left(\left[\frac{n-1}{2} \right]\right)}+\alpha_{n-1}^{\left(\left[\frac{n-1}{2} \right]-1\right)}\right),
\end{array}
\right.
$$
на послідовності $\alpha_{n}^{(1)},\alpha_{n}^{(2)},\ldots, \alpha_{n}^{\left(\left[\frac{n-1}{2} \right]\right)}$ із початковими умовами $\alpha_{2i}^{(i)}=0.$

\end{proof}
Розглянемо  допоміжне неоднорідне рекурентне співвідношення 
$$
(n-a) x_n=n(x_{n-1}+g_{n-1}), x_a=0,n \geq a.
$$
а  $g_n$ -- деяка  фіксована послідовність.  Тоді
\begin{lemma}
$$x_n=n^{{\underline{a}}}\sum_{i=a}^{n-1} \frac{g_i}{i^{\phantom{}^{\underline{a}}}}={n \choose a} \sum_{i=a}^{n-1} \dfrac{g_i}{{i \choose a}} $$
де $n^{\underline{a}}:=n(n-1)(n-2)\cdots (n-(a-1))$
\end{lemma}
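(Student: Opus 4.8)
The plan is to read the stated formula as the unique solution of the inhomogeneous first-order linear recurrence $(n-a)x_n=n(x_{n-1}+g_{n-1})$ with boundary value $x_a=0$, and to solve it by dividing out its homogeneous part. First I would rewrite the recurrence for $n>a$ as $x_n=\frac{n}{n-a}(x_{n-1}+g_{n-1})$ and observe that the homogeneous solution is exactly the sequence ${n \choose a}$, because ${n \choose a}=\frac{n}{n-a}{n-1 \choose a}$. This identifies ${n \choose a}$ (equivalently $n^{\underline{a}}$, since the two differ only by the constant factor $a!$) as the natural integrating factor for the problem.

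Accordingly, I would substitute $x_n={n \choose a}\,y_n$. Using $(n-a){n \choose a}=n{n-1 \choose a}$, the left-hand side becomes $n{n-1 \choose a}\,y_n$, while the right-hand side is $n{n-1 \choose a}\,y_{n-1}+n\,g_{n-1}$. Dividing through by $n{n-1 \choose a}$, which is legitimate for $n>a$ since there ${n-1 \choose a}\neq 0$, collapses the recurrence to the purely additive form $y_n-y_{n-1}=g_{n-1}\big/{n-1 \choose a}$.

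The boundary value $x_a=0$ forces $y_a=0$, so telescoping from $a$ to $n$ gives $y_n=\sum_{j=a+1}^{n} g_{j-1}\big/{j-1 \choose a}$, and the reindexing $i=j-1$ turns this into $y_n=\sum_{i=a}^{n-1} g_i\big/{i \choose a}$. Multiplying back by ${n \choose a}$ yields the second displayed expression, and the first follows immediately from $n^{\underline{a}}=a!\,{n \choose a}$ and $i^{\underline{a}}=a!\,{i \choose a}$, the factors of $a!$ cancelling between numerator and denominator; thus the claimed double equality holds.

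The argument is essentially routine, and there is no serious obstacle: the one point deserving care is the role of the starting index, namely that the recurrence determines $x_n$ only for $n>a$ (where the coefficient $n-a$ is invertible) while $x_a=0$ is imposed as a boundary condition, so the telescoping must begin at $i=a$. The single genuinely creative step is recognizing ${n \choose a}$ as the correct integrating factor; once it is in hand everything reduces to summing a telescoping series. As an alternative, one could dispense with the substitution entirely and verify the closed form directly by induction on $n$, plugging it into $(n-a)x_n=n(x_{n-1}+g_{n-1})$, in which case the proof is a short and purely mechanical check.
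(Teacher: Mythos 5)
Your proof is correct, but it takes a genuinely different route from the paper's. The paper proves the lemma by direct verification: it substitutes the closed form $x_n = n^{\underline{a}}\sum_{i=a}^{n-1} g_i/i^{\underline{a}}$ into the right-hand side $n(x_{n-1}+g_{n-1})$, absorbs $g_{n-1}$ as the $i=n-1$ term of the sum, and uses the falling-factorial identity $n\,(n-1)^{\underline{a}} = (n-a)\,n^{\underline{a}}$ to conclude that this equals $(n-a)x_n$ --- exactly the ``purely mechanical check'' you mention as an alternative in your final sentence. What you do instead is derive the formula: you identify ${n \choose a}$ as the homogeneous solution via ${n \choose a} = \frac{n}{n-a}{n-1 \choose a}$, substitute $x_n = {n \choose a}\, y_n$, reduce the recurrence to the telescoping form $y_n - y_{n-1} = g_{n-1}\big/{n-1 \choose a}$, and sum from the boundary value $y_a=0$. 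Your route is more informative: it explains where the closed form comes from, and it simultaneously delivers uniqueness (for $n>a$ the coefficient $n-a$ is nonzero, so the recurrence plus $x_a=0$ determines the sequence), a point the paper leaves implicit, since verification alone only shows the formula is \emph{a} solution. The paper's route buys brevity, needing only the one identity above. Your attention to the boundary index (that $x_a=0$ must be imposed rather than derived, because the coefficient vanishes at $n=a$) and your passage between the binomial and falling-factorial forms by cancelling $a!$ are both correct.
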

\begin{proof}
Справді,  нехай
$$
x_n=n^{{\underline{a}}}\sum_{i=a}^{n-1} \frac{g_i}{i^{\phantom{}^{\underline{a}}}}.
$$ 
Тоді
\begin{gather*}
n(x_{n-1}+g_{n-1})=n\left( (n-1)^{{\underline{a}}}\sum_{i=a}^{n-2} \frac{g_i}{i^{\phantom{}^{\underline{a}}}}+g_{n-1}\right)=\\=n\left( (n-1)^{{\underline{a}}}\sum_{i=a}^{n-2} \frac{g_i}{i^{\phantom{}^{\underline{a}}}}+\frac{(n-1)^{{\underline{a}}}\,g_{n-1}}{(n-1)^{{\underline{a}}}}\right)=n(n-1)^{{\underline{a}}} \left( \sum_{i=a}^{n-2} \frac{g_i}{i^{\phantom{}^{\underline{a}}}}+\frac{g_{n-1}}{(n-1)^{{\underline{a}}}}\right)=\\=n(n-1)^{{\underline{a}}}\sum_{i=a}^{n-1} \frac{g_i}{i^{\phantom{}^{\underline{a}}}}=(n-a) n^{{\underline{a}}}\sum_{i=a}^{n-1} \frac{g_i}{i^{\phantom{}^{\underline{a}}}}=(n-a) x_n.
\end{gather*}
\end{proof}
Як  наслідок  маємо, що система  рекурентних рівнняь має такий  розвязок
\begin{gather*}
\alpha_n^{(1)}=n(n-2),\\
\alpha_n^{(k)}=(n)_{2k-1}\sum_{i=2k}^{n-1} \frac{\alpha^{(k-1)}_i}{(i)_{2k-1}}.
\end{gather*}

Розв'язуючи  систему послідовно знаходимо
\begin{align*}
&\alpha^{(1)}_n=n(n-2),\\
&\alpha^{(2)}_n=\frac14\, \left( n-1 \right) n \left( 3\,n-7 \right)  \left( n-4 \right)=\frac{1}{2!} (n-4) { n \choose 2} (3n-7),\\
&\alpha^{(3)}_n=\frac{1}{3!} (n-6) { n \choose 3}   \left( 19\,{n}^{2}-141\,n+254 \right),\\
&\alpha^{(4)}_n=\frac{1}{4!} (n-8) { n \choose 4}  \left( 
211\,{n}^{3}-3258\,{n}^{2}+16481\,n-27306 \right),\\
&\alpha^{(5)}_n=\frac{1}{5!} (n-10) { n \choose 5}  \left( 3651\,{n}^{4}-96550\,{n}^{3}+946185\,{n}^{2}-4071950\,n+
6492024 \right) 
\end{align*}
Для  знаходження загального розв'язку  системи 
розглянемо невідомі  послідовності $\alpha_n^{(s)}$ у  базисі із спадних факторіалів.  Нехай 
$$
\alpha_n^{(s)}=\beta_0^{(s)} n^{\underline{s}}+\beta_1^{(s)} n^{\underline{s+1}}+\cdots+\beta_s^{(s)} n^{\underline{2s}},
$$
  
Легко  бачити, що  
$$
(n-s)n^{\underline{s}}=n^{\phantom{}^{\underline{s+1}}}, n(n-1)^{\underline{s}}=n^{\phantom{}^{\underline{s+1}}}.  
$$
Тоді
\begin{gather*}
(n-2s)\alpha_n^{(s)}=(n-2s)(\beta_0^{(s)} n^{\underline{s}}+\beta_1^{(s)}n^{\underline{s+1}}+\cdots+\beta_s^{(s)}n^{\underline{2s}})=\\=((n-s)-s)\beta_0^{(s)} n^{\phantom{}^{\underline{s}}}+((n-(s+1))-(s-1)))\beta_1^{(s)}n^{\underline{s+1}}+\cdots+(n-2s)\beta_s^{(s)}n^{\underline{2s}}=\\=
\sum_{i=0}^{s}\beta_{i}^{(s)}n^{\phantom{}^{\underline{s+i+1}}}-\sum_{i=0}^{s-1}(s-i)\beta_{i}^{(s)}n^{\phantom{}^{\underline{s+i}}}
\end{gather*}
З іншого боку
\begin{gather*}
n(\alpha_{n-1}^{(s)}+\alpha_{n-1}^{(s-1)})=\sum_{i=0}^{s} n \beta_{i}^{(s)}(n-1)^{\phantom{}^{\underline{s+i}}}+\sum_{i=0}^{s-1} n \beta_{i}^{(s-1)}(n-1)^{\phantom{}^{\underline{s-1+i}}}=\sum_{i=0}^{s}  \beta_{i}^{(s)}n^{\phantom{}^{\underline{s+i+1}}}+\sum_{i=0}^{s-1}  \beta_{i}^{(s-1)}n^{\phantom{}^{\underline{s+i}}}.
\end{gather*}
Звідси, прирівнявши відповіднi  коефіцієнти, зразу знаходимо, що 
$$
\beta_i^{(s)}=\frac{\beta_{i}^{(s-1)}}{i-s},i=0\ldots s-1.
$$
Коефіцієнт $\beta_s^{(s)}$  знайдемо з умови, що $a_{2s}^{(s)}=0.$ Маємо
$$
\sum_{i=0}^{s-1}\frac{\beta_i^{(s-1)}}{i-s}(2s)^{\phantom{}^{\underline{s+i}}}+\beta_{s}^{(s)} (2s)!=0
$$
Звідси 
$$
\beta_{s}^{(s)}=\frac{1}{(2s)!}\sum_{i=0}^{s-1}\frac{\beta_i^{(s-1)}}{s-i}(2s)^{\phantom{}^{\underline{s+i}}}=\sum_{i=0}^{s-1} \frac{\beta_{i}^{(s-1)}}{(s-i) (s-i)!}=-\sum_{i=0}^{s-1} \frac{\beta_{i}^{(s)}}{ (s-i)!}.
$$
Таким чином, маємо  такi рекурентнi співвідношення  для $\beta_{n}^{(s)}$:
\begin{align*}
&\beta_0^{(s)}=-\frac{\beta_0^{(s-1)}}{s},\\
&\beta_1^{(s)}=-\frac{\beta_1^{(s-1)}}{s-1},\\
&\beta_2^{(s)}=-\frac{\beta_1^{(s-1)}}{s-2},\\
&\ldots \\
&\beta_{s-1}^{(s)}=-\beta_1^{(s-1)},\\
&\beta_{s}^{(s)}:=b_s=-\sum_{i=0}^{s-1} \frac{\beta_{i}^{(s)}}{ (s-i)!}.
\end{align*}

Звідси знаходимо, що 
$$
\beta_i^{(s)}=\frac{(-1)^{s-i}}{(s-i)!}\, b_i, \text{  for $i=0,\ldots,s-1$, }
$$
Oтримуємо таке рекурентне співвідношення на послідовність $b_s:$
$$
b_s=-\frac{(-1)^{s}}{(s!)^2}-\sum_{i=1}^{s-1} \frac{(-1)^{s-i}}{((s-i)!)^2}\, b_i,
$$
Поклавши $b_0=1$ отримаємо, що елементи послідовності $b_n$ задовільняють таке рекурентне співвідношення
$$
\sum_{i=0}^n \frac{(-1)^{n-i}}{(n-i)!^2}\,b_n=0,n>0
$$
Розглянемо ряд 
$$
\sum_{n=0}^{\infty} \frac{(-1)^{n}}{n!^2}z^n=J_0(\sqrt{4z}\,)
$$
і  звичайну породжуючу функцію 
$$
G(b_n,z)=\sum_{n=0}^{\infty} b_n x^n.
$$
Тоді, враховуючи рекурентне співвідношення знайдемо, що $$ J_0(\sqrt{4z}\,)G(b_n,z)=1.$$
Звідси 
$$
G(b_n,z)=\frac{1}{J_0(\sqrt{4z}\,)}=1+z+{\frac {3}{4}}{z}^{2}+{\frac {19}{36}}{z}^{3}+{\frac {211}{576}}{z}^{4}+{\frac {1217}{4800}}{z}^{5}+{\frac {30307}{172800}}{z}^{6}+\cdots
$$

Отже   ми  довели таку  теорему

\begin{theorem}
A Appel-Lucas   interwining map has  the form 
$$
\psi_{AL}(x_n)=x_n+\alpha^{(1)}_n x_{n-2}+\alpha^{(2)}_n x_{n-4}+\ldots+\alpha^{(i)}_n x_{n-2i}+\ldots+\alpha_n^{\left( \left[ \frac{n-1}{2} \right]\right)} x_{n-2 \left[ \frac{n-1}{2} \right]},
$$
where
$$
\alpha^{(s)}_n=\frac{(-1)^{s}}{s!} b_0 n^{{\underline{s}}}+\cdots+\frac{(-1)^{s-i}}{(s-i)!}\, b_i n^{\underline{s+i}}+\cdots+ b_s n^{\underline{2s}},
$$
and the generating function for  $b_0, b_1, \ldots, b_n,\ldots$  are defined через обернену  функцію Бесселя
$$
\sum_{i=0}^{\infty}b_i z^i=J_0^{-1}(\sqrt{4z}\,).
$$
\end{theorem}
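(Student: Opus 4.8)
The plan is to determine $\psi_{AL}$ entirely from the intertwining relation $\psi_{AL}\mathcal{D}_{\mathcal{A}}=\mathcal{D}_{\mathcal{L}}\psi_{AL}$ together with the normalization built into the ansatz. Writing $\psi_{AL}(x_n)$ in the stated even-step form, I would first apply both sides of the intertwining identity to $x_n$. Since $\mathcal{D}_{\mathcal{A}}(x_n)=n\,x_{n-1}$, the right-hand side is $n\,\psi_{AL}(x_{n-1})$, while the left-hand side is computed by letting $\mathcal{D}_{\mathcal{L}}$ act term by term on each $\alpha_n^{(i)}x_{n-2i}$. Matching the coefficient of each $x_{n-1-2j}$ produces precisely the triangular recurrence system of the first Lemma, together with the boundary condition $\alpha_{2i}^{(i)}=0$ coming from the requirement that $\psi_{AL}(x_{2i})$ carry no $x_0$ term. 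Thus the first Lemma already reduces the theorem to solving this system.

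To solve it, I would feed each line of the system into the second Lemma, which integrates the scalar recurrence $(n-a)x_n=n(x_{n-1}+g_{n-1})$ with $x_a=0$. This yields $\alpha_n^{(1)}=n(n-2)$ and, inductively, the summation formula $\alpha_n^{(k)}=(n)_{2k-1}\sum_{i=2k}^{n-1}\alpha_i^{(k-1)}/(i)_{2k-1}$. The crucial structural observation, and the step I expect to be the main obstacle, is that each $\alpha_n^{(s)}$ is a polynomial in $n$ that is best written in the falling-factorial basis $n^{\underline{s}},n^{\underline{s+1}},\ldots,n^{\underline{2s}}$. Adopting the ansatz $\alpha_n^{(s)}=\sum_{i=0}^{s}\beta_i^{(s)}n^{\underline{s+i}}$ and substituting into the recurrence, the two shift identities $(n-s)n^{\underline{s}}=n^{\underline{s+1}}$ and $n(n-1)^{\underline{s}}=n^{\underline{s+1}}$ let me rewrite both sides in the same basis; comparing coefficients then decouples the double-indexed system into the one-step relation $\beta_i^{(s)}=\beta_i^{(s-1)}/(i-s)$ for $i<s$, with the diagonal entry $\beta_s^{(s)}=:b_s$ pinned down by $\alpha_{2s}^{(s)}=0$.

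From here the argument is a clean induction. Solving $\beta_i^{(s)}=\beta_i^{(s-1)}/(i-s)$ by induction on $s$ gives $\beta_i^{(s)}=\frac{(-1)^{s-i}}{(s-i)!}\,b_i$ for $0\le i\le s-1$ (the inductive step uses $(i-s)(s-1-i)!=-(s-i)!$), and this also holds for $i=s$ by the very definition $b_s=\beta_s^{(s)}$. Reassembling $\alpha_n^{(s)}=\sum_{i=0}^{s}\beta_i^{(s)}n^{\underline{s+i}}$ then gives exactly the closed form in the statement. It remains to identify the $b_i$: substituting $\beta_i^{(s)}=\frac{(-1)^{s-i}}{(s-i)!}b_i$ into the boundary relation $b_s=-\sum_{i=0}^{s-1}\beta_i^{(s)}/(s-i)!$ turns it into the convolution $\sum_{i=0}^{s}\frac{(-1)^{s-i}}{((s-i)!)^2}b_i=0$ for $s>0$, with $b_0=1$.

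Finally I would read this convolution through generating functions. Setting $B(z)=\sum_i b_i z^i$ and recognizing $\sum_{k\ge 0}\frac{(-1)^k}{(k!)^2}z^k=J_0(\sqrt{4z}\,)$, the convolution says exactly that $J_0(\sqrt{4z}\,)\,B(z)=1$, whence $B(z)=J_0^{-1}(\sqrt{4z}\,)$, which is the asserted generating function. The only genuinely delicate points are the basis-change ansatz that decouples the recurrence and the bookkeeping of signs and factorials in the induction; once those are in place, the Bessel identification is immediate from the power series of $J_0$.
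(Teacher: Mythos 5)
Your proposal is correct and follows essentially the same route as the paper: deriving the recurrence system for the $\alpha_n^{(i)}$ from the intertwining relation, integrating it via the scalar recurrence lemma, decoupling in the falling-factorial basis $\{n^{\underline{s+i}}\}$ with the diagonal $b_s$ fixed by $\alpha_{2s}^{(s)}=0$, and identifying the resulting convolution $\sum_{i=0}^{s}\frac{(-1)^{s-i}}{((s-i)!)^2}b_i=0$ with the reciprocal of $J_0(\sqrt{4z}\,)$. No substantive differences from the paper's argument.
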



\subsection{Appel-Fibonacci interwining map}


Шукаємо явний  вигляд ізоморфізму ядер $\ker \mathcal{D}_{\mathcal{A}} \stackrel{\psi}{\rightarrow} \ker \mathcal{D}_{\mathcal{F}}.$

Шукаємо $\psi$  у  вигляді:
$$
\psi(x_n)=\alpha^{(0)}_n x_{n+1}+\alpha^{(1)}_n x_{n-1}+\alpha^{(2)}_n x_{n-3}+\ldots+\alpha^{(i)}_n x_{n+1-2i}+\ldots+\alpha_n^{\left( \left[ \frac{n-1}{2} \right]\right)} x_{n+1-2 \left[ \frac{n-1}{2} \right]}.
$$
Доведемо твердження 

\begin{lemma}
Коефіцієнти $\alpha^{(1)}_n,$ $\alpha^{(2)}_n,\ldots, \alpha_n^{\left( \left[ \frac{n-1}{2} \right]\right)}$  задовільняють такій системи рекурентних рівнянь 
$$
\left\{ 
\begin{array}{l}
\alpha_{n}^{(0)}=1 ,\\
\alpha_{n}^{(1)}=n \left( \dfrac{\alpha_{n-1}^{(1)}}{n-2}+ \dfrac{\alpha_{n-1}^{(0)}}{n} \right),\\
\alpha_{n}^{(2)}=n \left( \dfrac{\alpha_{n-1}^{(2)}}{n-4}+ \dfrac{\alpha_{n-1}^{(1)}}{n-2} \right),\\
.................................................. \\
\alpha_{n}^{(s)}=n \left( \dfrac{\alpha_{n-1}^{(s)}}{n-2s}+ \dfrac{\alpha_{n-1}^{(s-1)}}{n-2(s-1)} \right),\\
\ldots
\end{array}
\right.
$$
із початковими  умовами $\alpha_{2i}^{(i)}=0.$
\end{lemma}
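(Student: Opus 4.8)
The plan is to use nothing but the defining property of the map together with coefficient comparison, exactly parallel to the proof of the preceding Appel--Lucas lemma. By construction $\psi$ is the $(\mathcal{D}_{\mathcal{A}},\mathcal{D}_{\mathcal{F}})$-intertwining map we are after, so it satisfies the operator identity $\psi\,\mathcal{D}_{\mathcal{A}}=\mathcal{D}_{\mathcal{F}}\,\psi$. I would apply both sides to the generator $x_n$ and match the resulting linear forms monomial by monomial. Since $\mathcal{D}_{\mathcal{A}}(x_n)=n\,x_{n-1}$, the left-hand side is immediately
$$
\psi(\mathcal{D}_{\mathcal{A}}(x_n))=n\,\psi(x_{n-1})=n\sum_{j\ge 0}\alpha^{(j)}_{n-1}\,x_{n-2j}.
$$

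For the right-hand side I would insert the ansatz $\psi(x_n)=\sum_{i\ge 0}\alpha^{(i)}_n x_{n+1-2i}$ and apply $\mathcal{D}_{\mathcal{F}}$ termwise, using $\mathcal{D}_{\mathcal{F}}(x_{n+1-2i})=\sum_{k\ge 0}(-1)^k(n-2i-2k)\,x_{n-2i-2k}$. Collecting the coefficient of a fixed $x_{n-2j}$ forces $i+k=j$, so that coefficient equals $(n-2j)\sum_{i=0}^{j}(-1)^{j-i}\alpha^{(i)}_n$. Note that the parities are consistent: every monomial in $\psi(x_n)$ carries an index congruent to $n+1$, and one application of $\mathcal{D}_{\mathcal{F}}$ lowers indices by odd amounts, producing indices of the parity of $n$, exactly as on the left. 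Equating coefficients of $x_{n-2j}$ then yields the raw system
$$
n\,\alpha^{(j)}_{n-1}=(n-2j)\sum_{i=0}^{j}(-1)^{j-i}\alpha^{(i)}_n,\qquad j=0,1,2,\ldots .
$$

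The remaining work is purely algebraic. I would introduce the alternating partial sums $S_j(n):=\sum_{i=0}^{j}(-1)^{j-i}\alpha^{(i)}_n$, which obey the telescoping identity $\alpha^{(j)}_n=S_j(n)+S_{j-1}(n)$ together with the convention $S_{-1}(n)=0$. The raw system rewrites as $S_j(n)=n\,\alpha^{(j)}_{n-1}/(n-2j)$; substituting this and the analogous expression for $S_{j-1}(n)$ into the telescoping identity gives precisely
$$
\alpha^{(j)}_n=n\left(\frac{\alpha^{(j)}_{n-1}}{n-2j}+\frac{\alpha^{(j-1)}_{n-1}}{n-2(j-1)}\right),
$$
which is the asserted recurrence, the case $j=0$ degenerating to $\alpha^{(0)}_n=\alpha^{(0)}_{n-1}=1$ in accordance with the normalization of the leading coefficient.

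Finally I would justify the boundary condition $\alpha^{(i)}_{2i}=0$. For $n=2i$ one has $\left[\frac{n-1}{2}\right]=i-1$, so the monomial $x_{n+1-2i}$ simply does not appear in the prescribed form of $\psi(x_n)$; imposing $\alpha^{(i)}_{2i}=0$ is exactly the convention that truncates the expansion at superscript $\left[\frac{n-1}{2}\right]$. It is moreover the condition that removes the apparent singularity of the recurrence at $n=2i$, where the factor $1/(n-2i)$ blows up, and it supplies the initial datum needed to solve each row of the system recursively in $n$. The one delicate point of the proof is the bookkeeping of indices and parities in the coefficient comparison; once the raw system is correctly extracted, the passage to the stated recurrence through the sums $S_j(n)$ and the verification of the boundary condition are routine.
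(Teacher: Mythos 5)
Your proof is correct and follows essentially the same route as the paper: apply the intertwining identity $\psi\,\mathcal{D}_{\mathcal{A}}=\mathcal{D}_{\mathcal{F}}\,\psi$ to $x_n$, expand $\mathcal{D}_{\mathcal{F}}$ termwise on the ansatz, and equate coefficients of $x_{n-2j}$ to get the alternating-sum system $(n-2j)\sum_{i=0}^{j}(-1)^{j-i}\alpha^{(i)}_n=n\,\alpha^{(j)}_{n-1}$, together with the truncation condition $\alpha^{(i)}_{2i}=0$. Your explicit telescoping step via the sums $S_j(n)$ (using $\alpha^{(j)}_n=S_j+S_{j-1}$) is exactly the passage the paper compresses into the phrase ``after transformations,'' so you have in fact supplied the detail the paper omits.
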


\begin{proof}
Тоді
\begin{gather*}
D_\mathcal{F}(\psi(x_n))=\alpha_{n}^{(0)}(n x_{n}-(n-2)x_{n-2}+(n-4)x_{n-4}-(n-6)x_{n-6}+\ldots)+\\+\alpha^{(1)}_n ((n-2)x_{n-2}-(n-4)x_{n-4}+\ldots)+\alpha^{(2)}_n ((n-4)x_{n-4}-(n-6)x_{n-6}+\ldots)+\ldots
\end{gather*}
Звідси знаходимо
\begin{gather*}
D_\mathcal{F}(\varphi(x_n))=n\alpha_{n}^{(0)} x_n+(n-2) x_{n-2}(\alpha_{n}^{(1)}-\alpha_{n}^{(0)})+(n-4) x_{n-4}(\alpha_{n}^{(2)}-\alpha_{n}^{(1)}+\alpha_{n}^{(0)})+\\+
x_{n-2i}(n-2i)(\alpha_n^{(i)}-\alpha_n^{(i-1)}+\alpha_n^{(i-2)}+\cdots+(-1)^i \alpha_n^{(0)})+\ldots+
\end{gather*}
З іншого боку,  має виконуватися тотожність,
$$
D_\mathcal{F}(\psi(x_n))=\psi(\mathcal{D}(x_n))
=n \psi(x_{n-1})=n \sum_{i=0}^{n-1}\alpha_{n-1}^{(i)}x_i.
$$
Таким чином, отримуємо  такі рекурентні  співвідношення на  $\alpha^{(i)}_n:$
\begin{gather*}
n\alpha_{n}^{(0)}=n \alpha_{n-1}^{(0)},\\
(n-2) (\alpha_{n}^{(1)}-\alpha_{n}^{(0)})=n \alpha^{(1)}_{n-1}, \alpha^{(1)}_{2}=0,\\
(n-4)(\alpha_{n}^{(2)}-\alpha_{n}^{(1)}+\alpha_{n}^{(0)})=n \alpha^{(2)}_{n-1},\alpha_4^{(1)}=0,\\
\ldots\\
(n-2i)\sum_{n-2i>0}  \alpha^{(i)}_n (-1)^i =n \alpha^{(i)}_{n-1}, \alpha_{2i}^{(i)}=0.
\end{gather*}
Після спрощення  отримуємо таку  систему рівнянь 
$$
\left\{ 
\begin{array}{l}
\alpha_{n}^{(0)}=1 ,\\
\alpha_{n}^{(1)}=n \left( \dfrac{\alpha_{n-1}^{(1)}}{n-2}+ \dfrac{\alpha_{n-1}^{(0)}}{n} \right),\\
\alpha_{n}^{(2)}=n \left( \dfrac{\alpha_{n-1}^{(2)}}{n-4}+ \dfrac{\alpha_{n-1}^{(1)}}{n-2} \right),\\
\ldots \\
\alpha_{n}^{(s)}=n \left( \dfrac{\alpha_{n-1}^{(2)}}{n-2s}+ \dfrac{\alpha_{n-1}^{(1)}}{n-2(s-1)} \right),\\
\ldots
\end{array}
\right.
$$
\end{proof}

\begin{lemma}Рекурентне рівняння
$$
x_n=n \left( \frac{x_{n-1}}{n-s}+ \frac{g_{n-1}}{n-(s-2)}\right), x(s)=0,
$$
має наступний розв'язок
$$
x_n=n^{\underline{s}} \sum_{i=s}^{n-1} \frac{g_i}{i^{\phantom{}^{\underline{s-1}}}\,\, (i-(s-3))}.
$$
\end{lemma}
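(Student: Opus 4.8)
The plan is to follow the same verification strategy used for the analogous earlier lemma: since the recurrence together with the boundary value $x(s)=0$ determines the whole sequence uniquely, it suffices to check that the proposed closed form satisfies both. First I would record the two falling-factorial identities that do all the work,
$$\frac{n}{n-s}\,(n-1)^{\underline{s}}=n^{\underline{s}},\qquad n^{\underline{s}}=n\,(n-1)^{\underline{s-1}},$$
each immediate from $n^{\underline{s}}=n(n-1)\cdots(n-s+1)$, together with the index shift $n-(s-2)=(n-1)-(s-3)$, which is what aligns the inhomogeneous term with the general summand. The boundary condition is automatic: at $n=s$ the sum $\sum_{i=s}^{n-1}$ is empty, so the formula returns $x_s=0$.

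Next I would substitute the closed form for $x_{n-1}$,
$$x_{n-1}=(n-1)^{\underline{s}}\sum_{i=s}^{n-2}\frac{g_i}{i^{\underline{s-1}}\,(i-(s-3))},$$
into the right-hand side of the recurrence. The first identity turns the coefficient $\frac{n}{n-s}(n-1)^{\underline{s}}$ into $n^{\underline{s}}$, so that
$$\frac{n}{n-s}\,x_{n-1}+\frac{n}{n-(s-2)}\,g_{n-1}=n^{\underline{s}}\sum_{i=s}^{n-2}\frac{g_i}{i^{\underline{s-1}}\,(i-(s-3))}+\frac{n}{n-(s-2)}\,g_{n-1}.$$

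It then remains to recognize the trailing term as exactly the missing $i=n-1$ summand. Using the second identity and the index shift,
$$n^{\underline{s}}\,\frac{g_{n-1}}{(n-1)^{\underline{s-1}}\,\bigl((n-1)-(s-3)\bigr)}=\frac{n\,(n-1)^{\underline{s-1}}}{(n-1)^{\underline{s-1}}\,(n-(s-2))}\,g_{n-1}=\frac{n}{n-(s-2)}\,g_{n-1},$$
so the two pieces merge into one sum running from $i=s$ to $i=n-1$, which is precisely the asserted expression for $x_n$; this establishes the recurrence for all $n>s$ and hence the lemma. The computation is routine once the identities are in place, and the only real care needed is bookkeeping — keeping the two superscripts $\underline{s}$ and $\underline{s-1}$ distinct and confirming that $n-(s-2)=(n-1)-(s-3)$ is what makes the boundary term fit the summand. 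If one prefers to derive rather than verify the formula, the substitution $x_n=n^{\underline{s}}u_n$ (with $n^{\underline{s}}$ the homogeneous solution of $x_n=\frac{n}{n-s}x_{n-1}$) reduces the recurrence to the telescoping difference $u_n-u_{n-1}=\frac{g_{n-1}}{(n-1)^{\underline{s-1}}((n-1)-(s-3))}$, which sums from $u_s=0$ to give the same closed form.
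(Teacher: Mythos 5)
Your proof is correct and follows essentially the same route as the paper's: verify the closed form by substituting it for $x_{n-1}$ in the recurrence, use the identities $\frac{n}{n-s}(n-1)^{\underline{s}}=n^{\underline{s}}$ and $n(n-1)^{\underline{s-1}}=n^{\underline{s}}$ together with the shift $n-(s-2)=(n-1)-(s-3)$ to absorb the $g_{n-1}$ term as the missing $i=n-1$ summand. Your additions — the explicit uniqueness/boundary remarks and the telescoping derivation — are minor polish on the same argument, and in fact tidy up details the paper leaves implicit.
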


\begin{proof}
Маємо
\begin{gather*}
n \left( \frac{x_{n-1}}{n-s}+ \frac{g_{n-1}}{n-(s-2)}\right)=n \left( \frac{(n-1)^{\underline{s}}}{n-s} \sum_{i=s}^{n-2} \frac{g_i}{i^{\phantom{}^{\underline{s-1}}}\,\, (i-(s-3))}+ \frac{g_{n-1}}{n-(s-2)}\right)=\\=
n \left( \frac{(n-1)^{\underline{s}}}{n-s} \sum_{i=s}^{n-2} \frac{g_i}{i^{\phantom{}^{\underline{s-1}}}\,\, (i-(s-3))}+ \frac{(n-1)^{\underline{s-1}}\,g_{n-1}}{((n-1)^{\underline{s-1}})((n-1)-(s-3))}\right)=\\=
\frac{n (n-1)^{\underline{s}}}{n-s} \sum_{i=s}^{n-2} \frac{g_i}{i^{\phantom{}^{\underline{s-1}}}\,\, (i-(s-3))}+ \frac{n (n-1)^{\underline{s-1}}\,g_{n-1}}{((n-1)^{\underline{s-1}})((n-1)-(s-3))}=\\=
 n^{\underline{s}}\left( \sum_{i=s}^{n-2} \frac{g_i}{i^{\phantom{}^{\underline{s-1}}}\,\, (i-(s-3))}+ \frac{g_{n-1}}{((n-1)^{\underline{s-1}})((n-1)-(s-3))}\right)=\\=
 n^{\underline{s}}  \sum_{i=s}^{n-1} \frac{g_i}{i^{\phantom{}^{\underline{s-1}}}\,\, (i-(s-3))}.
\end{gather*}
Тут використали співвідношення 
$$
\frac{n (n-1)^{\underline{s}}}{n-1}=n^{\underline{s}}, n (n-1)^{\underline{s-1}}=n^{\underline{s}}, 
$$
\end{proof}

Розв'язуючи  рекурентні  рівняння при початковій  умові $\alpha_{2s}^{(s)}=0$ послідовно  знаходимо
\begin{gather*}
\alpha_{n}^{(0)}=1,\\
\alpha_{n}^{(1)}=\frac{1}{2} (n-1)(n-2),\\
\alpha_{n}^{(2)}=\frac16\, \left( n-4 \right)  \left( n-3 \right)  \left( n-2 \right) n,\\
\alpha_{n}^{(3)}={\frac {1}{144}}\, \left( n-1 \right) n \left( n-4 \right)  \left( n-5 \right)  \left( 7\,n-17 \right)  \left( n-6 \right),\\
\alpha_{n}^{(4)}={\frac {1}{2880}}\, \left( n-8 \right)  \left( 39\,{n}^{2}-296\,n+545 \right)  \left( n-7 \right)  \left( n-6 \right)  \left( n-2 \right)  \left( n-1 \right) n
\end{gather*}

Будемо розглядати невідомі  послідовності $\alpha_n^{(s)}, s>1$ у  базисі із спадних факторіалів.  Нехай 
$$
\alpha_n^{(s)}=(n-(2s-1))\left(\beta_0^{(s)} n^{\underline{s-1}}+\beta_1^{(s)} n^{\underline{s}}+\cdots+\beta_{s}^{(s)} n^{\underline{2s-1}}\right)=(n-(2s-1))\sum_{i=0}^{s}\beta_i^{(s)} n^{\underline{s-1+i}}.
$$

Тоді
\begin{gather*}
\alpha_n^{(s)}=(n-(2s-1))\sum_{i=0}^{s}\beta_i^{(s)} n^{\underline{s-1+i}}=\sum_{i=0}^{s-1}\beta_i^{(s)}(n-(s+i-1)-(s-i)) n^{\underline{s+i}}=\\=\sum_{i=0}^{s}\beta_i^{(s)}(n-(s+i-1)) n^{\underline{s+i-1}}\,-\sum_{i=0}^{s}\beta_i^{(s)}(s-i) n^{\underline{s-1+i}}=
\sum_{i=0}^{s}\beta_{i}^{(s)}n^{\phantom{}^{\underline{s+i}}}-\sum_{i=0}^{s-1}(s-i)\beta_{i}^{(s)}n^{\phantom{}^{\underline{s-1+i}}}
\end{gather*}
З іншого боку ми маємо
\begin{align*}
&\alpha_{n-1}^{(s)}=(n-2s)\sum_{i=0}^{s}\beta_i^{(s)} (n-1)^{\underline{s-1+i}}\,,\\
&\alpha_{n-1}^{(s-1)}=(n-(2s-2))\sum_{i=0}^{s-1}\beta_i^{(s-1)} (n-1)^{\underline{s+i-2}}\,.
\end{align*}
Тоді
\begin{gather*}
\alpha_{n}^{(s)}=n\left(\frac{\alpha_{n-1}^{(s)}}{n-2s}+\frac{\alpha_{n-1}^{(s-1)}}{n-(2s-2)}\right)=\sum_{i=0}^{s} n \beta_{i}^{(s)}(n-1)^{\phantom{}^{\underline{s-1+i}}}+\sum_{i=0}^{s-1} n \beta_{i}^{(s-1)}(n-1)^{\phantom{}^{\underline{s-2+i}}}=\\=\sum_{i=0}^{s}  \beta_{i}^{(s)}n^{\phantom{}^{\underline{s+i}}}+\sum_{i=0}^{s-1}  \beta_{i}^{(s-1)}n^{\phantom{}^{\underline{s-1+i}}}.
\end{gather*}
Отже 
$$
\sum_{i=0}^{s-1}  \beta_{i}^{(s-1)}n^{\phantom{}^{\underline{s-1+i}}}=-\sum_{i=0}^{s-1}(s-i)\beta_{i}^{(s)}n^{\phantom{}^{\underline{s-1+i}}}
$$
Звідси, прирівнявши відповіднi  коефіцієнти, зразу знаходимо, що

$$
\beta_i^{(s)}=-\frac{\beta_{i}^{(s-1)}}{s-i},i=0\ldots s-1.
$$

Коефіцієнт $\beta_{s}^{(s)}$  знайдемо з умови, що $\alpha_{2s}^{(s)}=0.$ Маємо
$$
\alpha_{2s}^{(s)}=\sum_{i=0}^{s}\beta_i^{(s)}(2s)^{\phantom{}^{\underline{s-1+i}}}=\sum_{i=0}^{s-1}\beta_i^{(s)}(2s)^{\phantom{}^{\underline{s-1+i}}}+\beta_{s}^{(s)} (2s)^{\phantom{}^{\underline{2s-1}}}=0
$$
Звідси, врахувавши $(2s)^{\phantom{}^{\underline{2s-1}}}=2s(2s-1)\ldots 2=(2s)!,$ отримаємо 
$$
\beta_{s}^{(s)}=-\frac{1}{(2s)!}\sum_{i=0}^{s-1}\beta_i^{(s)}(2s)^{\phantom{}^{\underline{s-1+i}}}=-\sum_{i=0}^{s-1} \frac{\beta_{i}^{(s)}}{ (s-i+1)!}.
$$
Таким чином, маємо  такi рекурентнi співвідношення  для $\beta_{n}^{(s)}$:
\begin{align*}
&\beta_0^{(s)}=-\frac{\beta_0^{(s-1)}}{s},\\
&\beta_1^{(s)}=-\frac{\beta_1^{(s-1)}}{s-1},\\
&\beta_2^{(s)}=-\frac{\beta_1^{(s-1)}}{s-2},\\
&\ldots \\
&\beta_{s-1}^{(s)}=-\beta_1^{(s-1)},\\
&\beta_{s}^{(s)}:=b_s=-\sum_{i=0}^{s-1} \frac{\beta_{i}^{(s)}}{ (s-i+1)!}.
\end{align*}

Звідси знаходимо, що 
$$
\beta_i^{s}=\frac{(-1)^{s-i}}{(s-i)!} b_i, \text{  for $i=0,\ldots,s-1$. }
$$
Oтримуємо таке рекурентне співвідношення на послідовність $b_s:$
$$
b_s=-\frac{(-1)^{s}}{((s+1)!)^2}-\sum_{i=1}^{s-1} \frac{(-1)^{s-i}}{((s-i)(s-i+1)!)} b_i,
$$
Поклавши $b_0=1$ отримаємо, що елементи послідовності $b_n$ задовільняють таке рекурентне співвідношення
$$
\sum_{i=0}^s \frac{(-1)^{n-i}}{(s-i)!(s-i+1)!}b_n=0.
$$
Розглянемо ряд 
$$
\sum_{n=0}^{\infty} \frac{(-1)^{n}}{n!(n+1)!}z^n=\frac{1}{\sqrt{z}} J_1(\sqrt{4z}\,)
$$
і  звичайну породжуючу функцію 
$$
G(b_n,z)=\sum_{n=0}^{\infty} b_n z^n.
$$
Тоді, враховуючи рекурентне співвідношення знйдемо, що $$ G(b_n,z) \frac{J_1(\sqrt{4z}\,)}{\sqrt{z}}=1.$$
Тому
$$
G(b_n,z)=\frac{\sqrt{z}}{J_1(\sqrt{4z}\,)}=1+\frac12\,z+\frac16\,{z}^{2}+{\frac {7}{144}}\,{z}^{3}+{\frac {13}{960}}\,{z}
^{4}+{\frac {107}{28800}}\,{z}^{5}+{\frac {409}{403200}}\,{z}^{6}+\cdots+
$$

Отже   ми  довели таку  теорему

\begin{theorem}
A Appel-Fibbonaci  $\psi_{AF}$  interwining map has  the form 
$$
\psi_{AL}(x_n)= x_{n+1}+\alpha^{(1)}_n x_{n-1}+\alpha^{(2)}_n x_{n-3}+\ldots+\alpha^{(i)}_n x_{n+1-2i}+\ldots+\alpha_n^{\left( \left[ \frac{n-1}{2} \right]\right)} x_{n+1-2 \left[ \frac{n-1}{2} \right]},
$$
where
$$
\alpha^{(s)}_n=(n-2s+1)\left( \frac{(-1)^{s}}{s!} b_0 n^{{\underline{s-1}}}+\cdots+\frac{(-1)^{s-i}}{(s-i)!}\, b_i n^{\underline{s+i}}+\cdots+ b_s n^{\underline{2s-1}} \right),
$$
and the generating function for  $b_0, b_1, \ldots, b_n,\ldots$  are defined через обернену  функцію Бесселя
$$
\sum_{i=0}^{\infty}b_i z^i=\frac{\sqrt{z}}{J_1(\sqrt{4z}\,)}.
$$
\end{theorem}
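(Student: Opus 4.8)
The plan is to mirror, step for step, the argument already carried out for the Appel--Lucas map $\psi_{AL}$. I would begin from the interwining condition $\psi_{AF}\mathcal{D}_{\mathcal{A}}=\mathcal{D}_{\mathcal{F}}\psi_{AF}$. Since $\mathcal{D}_{\mathcal{A}}(x_n)=n\,x_{n-1}$, expanding both sides of $D_{\mathcal{F}}(\psi_{AF}(x_n))=n\,\psi_{AF}(x_{n-1})$ in the basis $\{x_j\}$ and comparing coefficients of each $x_{n-2i}$ produces exactly the recurrence system of the first Lemma of this subsection, together with the normalisation $\alpha_{2i}^{(i)}=0$. In particular $\alpha_n^{(0)}=1$, so the leading term is indeed $x_{n+1}$. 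The coefficients $\alpha_n^{(s)}$ are thereby uniquely determined, and what remains is to solve the recurrences explicitly and to identify the generating function of the diagonal constants $b_s$.

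First I would feed each scalar recurrence $\alpha_n^{(s)}=n\bigl(\tfrac{\alpha_{n-1}^{(s)}}{n-2s}+\tfrac{\alpha_{n-1}^{(s-1)}}{n-2(s-1)}\bigr)$, with $\alpha_{2s}^{(s)}=0$, into the second Lemma, which solves precisely recurrences of the shape $x_n=n\bigl(\tfrac{x_{n-1}}{n-s}+\tfrac{g_{n-1}}{n-(s-2)}\bigr)$. To organise the resulting expressions into a workable induction I would then posit the ansatz
$$
\alpha_n^{(s)}=(n-(2s-1))\sum_{i=0}^{s}\beta_i^{(s)}\,n^{\underline{s-1+i}},
$$
substitute it into the recurrence, and rewrite every product $(n-m)\,n^{\underline{m}}$ and $n(n-1)^{\underline{m}}$ by means of the two identities $(n-m)n^{\underline{m}}=n^{\underline{m+1}}$ and $n(n-1)^{\underline{m}}=n^{\underline{m+1}}$. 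Matching the coefficient of each falling factorial $n^{\underline{s-1+i}}$ collapses the two-parameter problem to the elementary off-diagonal relations $\beta_i^{(s)}=-\beta_i^{(s-1)}/(s-i)$ for $i=0,\ldots,s-1$.

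These off-diagonal relations telescope at once to $\beta_i^{(s)}=\dfrac{(-1)^{s-i}}{(s-i)!}\,b_i$, where $b_i:=\beta_i^{(i)}$ is the diagonal coefficient. The single remaining degree of freedom at each level, namely $b_s$, is then fixed by the boundary condition $\alpha_{2s}^{(s)}=0$: evaluating the ansatz at $n=2s$ and using $(2s)^{\underline{2s-1}}=(2s)!$ gives
$$
b_s=-\sum_{i=0}^{s-1}\frac{(-1)^{s-i}}{(s-i)!\,(s-i+1)!}\,b_i .
$$
Together with $b_0=1$, this is equivalent to the convolution identity $\sum_{i=0}^{s}\dfrac{(-1)^{s-i}}{(s-i)!\,(s-i+1)!}\,b_i=0$ valid for every $s>0$.

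The decisive step, and the part I expect to be the main obstacle, is recognising this convolution as a statement about generating functions. Writing $G(z)=\sum_{n\ge 0}b_n z^n$, the identity asserts that $G(z)$ multiplied by $\sum_{n\ge 0}\tfrac{(-1)^n}{n!\,(n+1)!}z^n$ has every coefficient beyond the constant term equal to zero, i.e. the product is identically $1$. Since $\sum_{n\ge 0}\tfrac{(-1)^n}{n!\,(n+1)!}z^n=\tfrac{1}{\sqrt{z}}J_1(\sqrt{4z}\,)$, I would conclude $G(z)=\sqrt{z}/J_1(\sqrt{4z}\,)$, which is exactly the claimed generating function. Reassembling the explicit values $\beta_i^{(s)}=\tfrac{(-1)^{s-i}}{(s-i)!}b_i$ into the ansatz reproduces the stated formula for $\alpha_n^{(s)}$ and completes the proof; the low-order values $\alpha_n^{(1)},\ldots,\alpha_n^{(4)}$ displayed above, and the first coefficients of $G$, serve as a consistency check.
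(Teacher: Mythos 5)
Your proposal is correct and follows essentially the same route as the paper: deriving the recurrence system for the $\alpha_n^{(s)}$ from the interwining condition, positing the falling-factorial ansatz $\alpha_n^{(s)}=(n-(2s-1))\sum_{i=0}^{s}\beta_i^{(s)}n^{\underline{s-1+i}}$, reducing to the off-diagonal relations $\beta_i^{(s)}=-\beta_i^{(s-1)}/(s-i)$, fixing the diagonal $b_s$ via $\alpha_{2s}^{(s)}=0$, and reading the resulting convolution $\sum_{i=0}^{s}\tfrac{(-1)^{s-i}}{(s-i)!\,(s-i+1)!}b_i=0$ as the statement $G(b_n,z)\,J_1(\sqrt{4z}\,)/\sqrt{z}=1$. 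Your write-up is in fact cleaner at the final step, since the paper's displayed recurrence for $b_s$ contains index and factorial typos that your convolution form silently corrects.
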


\textbf{Задача.} Знайти формулу для $b_n.$

\newpage
Із рекурентного співвідношення для  многочленів Фібоначі виплаває такйи  зв'язок між диференціюваннями  Вейтценбека і Фібоначчі
$$
\mathcal{D}(x_n)=2 D_F(x_n)-x_2 D_F(x_{n-1}).
$$


\begin{thebibliography}{15}



\bibitem{F_P} Falcon, Sergio; Plaza, Angel
On k-Fibonacci sequences and polynomials and their derivatives.
Chaos Solitons Fractals 39, No. 3, 1005-1019 (2009)


\bibitem{Filip} P. Filipponi and A.F. Horadam. “Derivatives of Fibonacci and Lucas Polynomials”
Applications of Fibonacci Numbers. Volume 4. Edited by G.E. Bergum, A.N. Philippou
and A.F. Horadam. Dordrecht: Kluwer, 1991: 99-108.

\bibitem{B_App} L. Bedratyuk, Semi-invariants of  binary forms and identities for Bernoulli, Euler and Hermite polynomials,  \textit{Acta Arith.}, 151 (2012), 361-376

\bibitem{Gle} Glenn O.  Treatise on theory of invariants.--Boston,1915.--312P.

\bibitem{Now} { Nowicki} A. Polynomial derivation and their Ring of Constants, --Torun:UMK,1994.--170р.

\bibitem{Essen}
{van den Essen A.}  Polynomial automorphisms and the Jacobian 
conjecture.--Basel: Birkh\"auser. --2000.--329 p.

\end{thebibliography}
\end{document}